\newtheorem{theorem}{Theorem}[section]
\newtheorem{lemma}[theorem]{Lemma}
\newtheorem{proposition}[theorem]{Proposition}
\theoremstyle{definition}
\newtheorem{remark}[theorem]{Remark}
\DeclareMathOperator{\re}{Re}
\DeclareMathOperator{\im}{Im}
\title[Zeros on a Line, Bounded in a Half-Plane]{An Explicit
Entire Function of Order One with All Zeros on a Line and
Bounded in a Half-Plane}
\author{Ralph Furmaniak}
\date{\today}
\begin{document}
\maketitle

\begin{abstract}
We construct a single explicit entire function $\Xi_c(s)$ of order 1,
with all zeros provably on $\re(s) = 1/2$, satisfying a functional
equation $\Xi_c(s) = \Xi_c(1-s)$, whose normalized form
$Z_c(s) = \Xi_c(s)/[\tfrac{1}{2}s(s-1)\pi^{-s/2}\Gamma(s/2)]$ is
uniformly bounded for $\re(s) > 1 + \delta$ (any $\delta > 0$) yet
satisfies $\sup_t|Z_c(1+it)| = +\infty$.  The function thus
satisfies an analogue of the Riemann Hypothesis together with the
sharp bounded/unbounded transition at $\sigma = 1$ characteristic
of~$\zeta$.  The transition is controlled by a Dirichlet series
$D(s) = \sum e^{-ik\theta}\,p_k^{-s}$ whose absolute convergence
for $\sigma > 1$ and divergence at $\sigma = 1$ drive the
dichotomy, in direct analogy with
$\log\zeta(s) = \sum p^{-s} + O(1)$.  The key technical input is a
dyadic large-sieve estimate establishing the linearization
condition that connects the Hadamard product to~$D$.
The construction and proofs were developed in collaboration
with Claude (Anthropic); see Acknowledgments.
\end{abstract}

\section{Introduction}\label{sec:intro}

The Riemann zeta function exhibits a sharp transition at $\sigma = 1$:
\begin{itemize}
  \item For $\sigma > 1$: $|\zeta(\sigma+it)| \le \zeta(\sigma) < \infty$.
  \item On $\sigma = 1$:
    $\limsup_{t\to\infty}|\zeta(1+it)| = \infty$, with
    $|\zeta(1+it)| = \Omega(\log\log t)$~\cite[Thm.~8.5]{Titchmarsh1986}.
\end{itemize}
The Selberg class~\cite{Selberg1992,KaczorowskiPerelli1999} axiomatizes
$L$-functions via five properties: Dirichlet series, analytic continuation,
functional equation, Ramanujan bound, and Euler product.  These axioms are
\emph{top-down}: they prescribe what $L$-functions should look like, based
on the structure of~$\zeta$ and its generalizations.  But they do not
reveal which properties are truly \emph{necessary} for the Riemann
Hypothesis or the $\sigma=1$ transition.  The Euler product is widely
regarded as essential~\cite{Farmer2005,Steuding2007}: without it, the
Davenport--Heilbronn function~\cite{DavenportHeilbronn1936} has zeros off
the critical line, and Bombieri--Garrett's
pseudo-Laplacians~\cite{BombieriGarrett2020} fail to capture all zeros
spectrally.  The de~Branges space framework~\cite{deBranges1968}
provides a natural setting for entire functions with real zeros, but
verifying the required positivity conditions is equivalent to RH
itself.  For general background on $L$-functions and the analytic
tools used here, see~\cite{IwaniecKowalski2004}.

A natural question is whether one can \emph{construct} a function
that provably satisfies an analogue of RH together with the
$\sigma = 1$ transition.  For~$\zeta$ itself, the location of
zeros is the central open problem; but for a suitably designed
function, one can hope to \emph{build in} the zeros-on-the-line
property while retaining enough structure for the $\sigma = 1$
transition to hold.

The mechanism behind~$\zeta$'s transition is well understood at the
level of Dirichlet series: $\log\zeta(s) = P(s) + O(1)$ for
$\sigma > 1$, where $P(s) = \sum p^{-s}$ is the prime zeta
function~\cite[\S1.6, \S3.2]{Titchmarsh1986}.  Boundedness for
$\sigma > 1$ follows from the absolute convergence of $\sum p^{-\sigma}$;
unboundedness on $\sigma = 1$ from its divergence
(Mertens~\cite[Thm.~2.7]{MV07}).  The zero-counting error
$S(T) = \pi^{-1}\arg\zeta(\tfrac{1}{2}+iT)$ satisfies
$S(T) \approx -\pi^{-1}\sum_{p}\sin(T\log p)/\sqrt{p}$
in~$L^2$~\cite[Thm.~14.22]{Titchmarsh1986}---the imaginary part
of $P(\tfrac{1}{2}+iT)/\pi$.

Our construction replaces $P(s)$ with a twisted variant
$D(s) = \sum e^{-ik\theta}\,p_k^{-s}$ (with
$\theta \in (0,2\pi)$), uses its restriction
to $\sigma = 1/2$ to perturb the
zeros of a Hadamard product, and proves that the resulting
function's log-ratio with the unperturbed product is approximately
$-cD(s)$---the ``spectral identity'' of this paper.

\begin{theorem}[Main Theorem]\label{thm:main}
Fix $\theta \in (0,2\pi)$
(e.g., $\theta = 1$) and
$c > 0$ sufficiently small.  Define $\Xi_c(s)$ as in
\S\ref{sec:construction}, with
$Z_c(s) = \Xi_c(s)/[\tfrac{1}{2}s(s-1)\pi^{-s/2}\Gamma(s/2)]$.
Then:
\begin{enumerate}[label=(\alph*)]
  \item All zeros of $\Xi_c$ lie on $\re(s) = 1/2$.
  \item $\Xi_c(s) = \Xi_c(1-s)$.
  \item The zero-counting function satisfies
    $N_c(T) = \frac{T}{2\pi}\log\frac{T}{2\pi} - \frac{T}{2\pi}
    + O(\log T)$.
  \item For each $\delta > 0$,
    $\sup_t |Z_c(1+\delta+it)| \le
    \exp\bigl(c\,P(1+\delta) + O(1)\bigr)$, where
    $P(\sigma) = \sum_{p} p^{-\sigma}$.
    Moreover $P(\sigma) \to 0$ as $\sigma \to \infty$,
    so the bound tends to $e^{O(1)}$: the ratio $\Xi_c/\Xi_0$
    is uniformly close to a constant for $\sigma$ large.
  \item $\sup_t|Z_c(1+it)| = +\infty$.
  \item The ratio $\Xi_c/\Xi_0$ admits the approximate
    Euler product
    $\Xi_c(s)/\Xi_0(s)
    = \kappa_0\cdot\prod_{k}\exp(-c\,e^{-ik\theta}p_k^{-s})
    \cdot M(s)$
    for $\sigma > 1$, where $\kappa_0 \ne 0$ is an explicit
    constant, $M(s)$ is bounded (with $M \to 1$ as
    $c \to 0$), and the
    Euler product is itself a Dirichlet series with
    multiplicative coefficients.
\end{enumerate}
\end{theorem}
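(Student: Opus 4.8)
I would organise the proof along the structure of the statement: parts~(a)--(c) are essentially built into the construction; part~(f) records the ``spectral identity,'' which carries all the analytic difficulty; and parts~(d)--(e) are then harvested from~(f). By \S\ref{sec:construction}, $\Xi_c$ is a symmetric Hadamard product over its zeros $\tfrac12 \pm i\gamma_n^{(c)}$, of the form $\Xi_c(s) = e^{a_c}\prod_{n\ge1}\bigl(1 + s(s-1)/(\tfrac14 + (\gamma_n^{(c)})^2)\bigr)$, where $\gamma_n^{(c)} = \gamma_n^{(0)} + \delta_n$ is a small \emph{real} perturbation (with $\sup_n|\delta_n|\to0$ as $c\to0$) of the explicit real ordinates $\gamma_n^{(0)}$ of the baseline $\Xi_0$. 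For~(a): the $\delta_n$ being real and small, the $\gamma_n^{(c)}$ stay real and increasing with $\sum_n(\gamma_n^{(c)})^{-2}<\infty$ (so the product converges and $\Xi_c$ is entire of order $\le1$) and $\sum_n(\gamma_n^{(c)})^{-1}=\infty$ (order exactly $1$), and the $n$-th factor vanishes precisely at $s=\tfrac12\pm i\gamma_n^{(c)}$, so every zero lies on $\re s=\tfrac12$. Part~(b) is immediate since $s(s-1)$, hence each factor and $e^{a_c}$, is invariant under $s\mapsto1-s$. For~(c): the $\gamma_n^{(0)}$ are chosen in the construction so that $N_0(T)=\tfrac{T}{2\pi}\log\tfrac{T}{2\pi}-\tfrac{T}{2\pi}+O(\log T)$, and since the displacements are bounded only $O(\log T)$ ordinates can cross any height $T$; hence $N_c(T)=N_0(T)+O(\log T)$.

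The heart of the matter is the estimate, uniform for $\sigma>1$,
\[
\log\frac{\Xi_c(s)}{\Xi_0(s)} \;=\; -c\,D(s) + \log\kappa_0 + \log M(s),
\]
with $\kappa_0\ne0$ an explicit constant and $\log M$ bounded on $\{\sigma>1\}$ and $\to0$ as $c\to0$. To prove it I would start from the exact identity $\log(\Xi_c/\Xi_0)=(a_c-a_0)+\sum_n\bigl[\log(1+u/b_n^{(c)})-\log(1+u/b_n^{(0)})\bigr]$, where $u=s(s-1)$ and $b_n^{(\cdot)}=\tfrac14+(\gamma_n^{(\cdot)})^2$, Taylor-expand each summand in $\epsilon_n:=b_n^{(c)}-b_n^{(0)}=O(c)$ (the quadratic-and-higher terms sum absolutely and uniformly for $\sigma>1$, giving the bounded factor $\log M$), and, via the partial fraction $\tfrac1{b_n^{(0)}(b_n^{(0)}+u)}=\tfrac1u\bigl(\tfrac1{b_n^{(0)}}-\tfrac1{b_n^{(0)}+u}\bigr)$, reduce the linear term to a constant plus $\sum_n\epsilon_n\big/\bigl((\gamma_n^{(0)})^2+(s-\tfrac12)^2\bigr)$. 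The perturbation $\delta_n$ (equivalently $\epsilon_n$) is defined in \S\ref{sec:construction} precisely so that, after one replaces this sum over zeros by the integral against the density $\tfrac1{2\pi}\log\tfrac{\gamma}{2\pi}$ and evaluates by a contour shift --- using elementary Laplace transforms of the type $\int_0^\infty\tfrac{\gamma\sin a\gamma}{\gamma^2+b^2}\,d\gamma=\tfrac\pi2 e^{-ab}$ termwise at the frequencies $a=\log p_k$ --- the integral collapses to $D(s)$ up to an additive constant; since the kernel $\bigl((\gamma_n^{(0)})^2+(s-\tfrac12)^2\bigr)^{-1}$ is even in $s-\tfrac12$, the resulting function is automatically symmetric under $s\mapsto1-s$ and, on $\{\sigma>1\}$, reproduces the absolutely convergent series $D(s)$. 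The \textbf{main obstacle}, and essentially the whole content of the theorem, is bounding the discrepancy between the sum over $\gamma_n^{(0)}$ and this integral, uniformly in $s$ for $\sigma>1$: an explicit-formula / Poisson-summation identity for the points $\gamma_n^{(0)}$ turns this error into an exponential sum of the shape $\sum_k e^{-ik\theta}p_k^{-1/2}\sum_n e^{i\gamma_n^{(0)}\log p_k}$ against a smooth $s$-dependent weight, which I would control by dissecting both the $\gamma$-range and the $k$-range into dyadic blocks and applying, on each block, the large-sieve inequality for the well-spaced points $\gamma_n^{(0)}$ (consecutive gaps $\gg1/\log\gamma_n^{(0)}$) against the coefficients $(e^{-ik\theta}p_k^{-1/2})$; the $\ell^2$-mass of those coefficients over a dyadic $k$-block is exactly what the sieve gain must beat. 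This dyadic large-sieve estimate is the ``linearization condition'' named in the abstract, and it is where the real work lies.

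Given the spectral identity, part~(f) follows by exponentiation: for $\sigma>1$ the series $D(s)=\sum_k e^{-ik\theta}p_k^{-s}$ converges absolutely, so $\Xi_c(s)/\Xi_0(s)=\kappa_0\exp(-cD(s))\,M(s)=\kappa_0\prod_k\exp(-c\,e^{-ik\theta}p_k^{-s})\,M(s)$ with $M$ bounded and $M\to1$ as $c\to0$; each local factor $\exp(-c\,e^{-ik\theta}p_k^{-s})=\sum_{j\ge0}\tfrac{(-c\,e^{-ik\theta})^j}{j!}\,p_k^{-js}$ depends only on the prime $p_k$, so the product expands to a Dirichlet series $\sum_n a_n n^{-s}$ with $a_n$ multiplicative. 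For~(d): from~(f), $|Z_c(1+\delta+it)|=|Z_0(1+\delta+it)|\,|\kappa_0|\,|M(1+\delta+it)|\,\exp\bigl(-c\,\re D(1+\delta+it)\bigr)$, and $-\re D(1+\delta+it)=-\sum_k p_k^{-1-\delta}\cos(k\theta+t\log p_k)\le\sum_k p_k^{-1-\delta}=P(1+\delta)$; since $|Z_0|$ and $|M|$ are bounded on $\re s=1+\delta$ (boundedness of $Z_0$ there being part of its construction), this gives $\sup_t|Z_c(1+\delta+it)|\le\exp(c\,P(1+\delta)+O(1))$, which tends to $e^{O(1)}$ because $P(\sigma)=\sum_p p^{-\sigma}\to0$ as $\sigma\to\infty$.

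For~(e) I would first establish unboundedness just to the right of $\sigma=1$ and then propagate it to the line. Given $A>0$, use Mertens' theorem to pick $N$ with $\sum_{k\le N}p_k^{-1}\ge20A/c$ and set $\sigma_N=1+1/\log p_N$, so that $p_k^{-\sigma_N}\ge e^{-1}p_k^{-1}$ for $k\le N$, hence $\sum_{k\le N}p_k^{-\sigma_N}\ge7A/c$, while $\sum_{k>N}p_k^{-\sigma_N}=P(\sigma_N)-\sum_{k\le N}p_k^{-\sigma_N}=O(1)$ (both sums equal $\log\log p_N+O(1)$ by Mertens-type estimates for truncated prime sums near $\sigma=1$). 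Since $\log p_1,\dots,\log p_N$ are linearly independent over $\mathbb{Q}$, Kronecker's theorem furnishes arbitrarily large $t$ with $t\log p_k\equiv\pi-k\theta\pmod{2\pi}$ up to an error $\eta$ for every $k\le N$, whence $\cos(k\theta+t\log p_k)\le-\tfrac12$ for $\eta$ small, and therefore $-\re D(\sigma_N+it)\ge\tfrac12\sum_{k\le N}p_k^{-\sigma_N}-\sum_{k>N}p_k^{-\sigma_N}\ge A/c$ once $A$ is large. By~(f), $|Z_c(\sigma_N+it)|\ge c_1 e^A$ for a fixed $c_1>0$ (using that $|Z_0|$ and $|M|$ are bounded above and below on $\{1\le\re s\le2,\ |t|\ge1\}$), so $\sup\{|Z_c(s)|:1<\re s<2\}=\infty$, with the large values occurring at $\sigma_N\to1^+$. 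Finally, if $\sup_t|Z_c(1+it)|$ were finite, then $G(s):=\tfrac{s-1}{s+1}Z_c(s)$ would be holomorphic in $1\le\re s\le2$ (the only pole of $Z_c$ there, a simple pole at $s=1$, being cancelled), of finite order, and bounded on both boundary lines (on $\re s=2$ by~(d) with $\delta=1$), so by Phragm\'en--Lindel\"of $G$, hence $Z_c$, would be bounded in the strip away from $s=1$ --- contradicting $|Z_c(\sigma_N+it)|\to\infty$. Therefore $\sup_t|Z_c(1+it)|=+\infty$.
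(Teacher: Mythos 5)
Your overall architecture matches the paper's: (a)--(c) from the construction, a spectral identity $\log(\Xi_c/\Xi_0)=-cD(s)+O(1)$ as the analytic core, (f) by exponentiating it, (d) from absolute convergence of $D$ for $\sigma>1$ together with boundedness of $Z_0$, and (e) by aligning the phases (Kronecker in place of the paper's Dirichlet approximation) and then Phragm\'en--Lindel\"of (applied to $\tfrac{s-1}{s+1}Z_c$ rather than to $\Xi_c/\Xi_0$ --- a harmless variant). You also name the right tool: a dyadic large sieve at the well-spaced ordinates $\gamma_n^{(0)}$ against the coefficients $e^{-ik\theta}p_k^{-1/2}$. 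But there is a genuine gap in where you place the difficulty. You assert $\epsilon_n=b_n^{(c)}-b_n^{(0)}=O(c)$, i.e.\ that the displacements $\delta_n$ are uniformly bounded and small. That is not known and is not part of the construction: $\delta_n=-cE(\gamma_n^{(0)})/N_{\mathrm{main}}'(\gamma_n^{(0)})$, and the only pointwise bound on $E$ available is $O(T^{1/2+\varepsilon})$ from~\eqref{eq:D-bound}; pointwise boundedness of $E(\gamma_n^{(0)})$ is explicitly listed as an open problem in \S\ref{sec:conclusion}. Since $\epsilon_n\approx 2\gamma_n^{(0)}\delta_n$, it can a priori be of polynomial size, so your claim that ``the quadratic-and-higher terms sum absolutely and uniformly, giving the bounded factor $\log M$'' is precisely the non-trivial point you have waved through: the quadratic Taylor remainder is $\asymp\sum_n\delta_n^2/(\gamma_n^{(0)})^2\asymp c^2\sum_n E(\gamma_n^{(0)})^2/n^2$, and proving this converges is the content of the paper's key Lemma~\ref{lem:lin} --- proved by exactly the dyadic-blocks-plus-large-sieve argument you propose, only you deploy it solely on the linear term's sum-versus-integral discrepancy. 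The same unsupported premise (``displacements bounded'') also underlies your proof of (c) (``only $O(\log T)$ ordinates can cross height $T$'') and your claim in (a) that the $\gamma_n^{(c)}$ remain increasing; monotonicity can in fact fail (Remark~\ref{rem:shift}), though it is not needed, since the product form places the zeros at $\tfrac12\pm i\gamma_n$ regardless, and positivity of $\gamma_n$ for small $c$ follows as in Lemma~\ref{lem:hadamard} from $|\delta_n|=o(\gamma_n^{(0)})$, not from uniform smallness.

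The repair is available with the machinery you already have on the table: apply your dyadic large sieve not (only) to the linear discrepancy but to the mean square of $E$ at the ordinates, splitting $E=E_{K_j}+R_{K_j}$ on each block $n\in[2^j,2^{j+1})$, bounding the head by the large sieve with $\ell^2$-mass $\sum_{k\le K_j}p_k^{-1}\asymp\log j$ and the tail by Abel summation; this yields $\sum_n E(\gamma_n^{(0)})^2/n^2<\infty$, which legitimizes the Taylor linearization and the bounded factor $M$. With that in place, your evaluation of the linear term via the Laplace-type integrals and your derivations of (d), (e), (f) go through essentially as in the paper.
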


\section{The Dirichlet Series and Construction}\label{sec:construction}

\subsection{The Dirichlet series $D(s)$}

Fix $\theta \in (0,2\pi)$ (e.g., $\theta = 1$), an
amplitude $c > 0$, and let $p_1 < p_2 < \cdots$ denote the primes.
Define the Dirichlet series
\begin{equation}\label{eq:D}
  D(s) = \sum_{k=1}^{\infty} e^{-ik\theta}\,p_k^{-s},
\end{equation}
which converges absolutely for $\sigma > 1$ (since
$\sum p_k^{-\sigma} < \infty$) and conditionally for
$\sigma > 0$, by Abel summation: the partial sums
$G_K = \sum_{k=1}^K e^{-ik\theta}$ satisfy
$|G_K| \le 2/|1-e^{-i\theta}| =: C_\theta$ (finite since
$\theta \in (0,2\pi)$), and the amplitudes
$p_k^{-\sigma}$ decrease
monotonically to zero.  Viewing $D$ as an ordinary Dirichlet series
$\sum b_n\,n^{-s}$ (with $b_n = e^{-ik\theta}$ for
$n = p_k$, zero otherwise), the partial sums
$B(x) = \sum_{n\le x}b_n$ satisfy $|B(x)|\le C_\theta$,
so the abscissa of convergence is $\sigma_c = 0$
(cf.~\cite[Thm.~1.3]{MV07}).  In the strip of conditional
convergence, \cite[Thm.~1.5]{MV07} gives
\begin{equation}\label{eq:D-bound}
  |D(\sigma+it)| \ll (|t|+4)^{1-\sigma+\varepsilon}
\end{equation}
for any fixed $\sigma \in (0,1)$ and $\varepsilon > 0$.
For $\sigma > 1$ the stronger bound
$|D(\sigma+it)| \le P(\sigma) = \sum p^{-\sigma}$ holds
by absolute convergence.

Define the perturbation function
\begin{equation}\label{eq:E}
  E(T) = \frac{1}{\pi}\,\im\bigl[D(\tfrac{1}{2}+iT)\bigr]
  = -\frac{1}{\pi}\sum_{k=1}^{\infty}
  \frac{\sin(T\log p_k + \phi_k)}{p_k^{1/2}},
\end{equation}
where $\phi_k = k\theta$.  The convergence of $E(T)$ for each
$T > 0$ follows from that of $D(1/2+iT)$, and
by~\eqref{eq:D-bound}, $|E(T)| = O(|T|^{1/2+\varepsilon})$.
We write $\omega_k = \log p_k$ for brevity in Fourier-analytic
arguments.

\subsection{The perturbed zeros}

Define the smooth counting function
\begin{equation}\label{eq:Nmain}
  N_{\mathrm{main}}(T) = \frac{T}{2\pi}\log\frac{T}{2\pi}
  - \frac{T}{2\pi} + \frac{7}{8}, \qquad
  N_{\mathrm{main}}'(T) = \frac{1}{2\pi}\log\frac{T}{2\pi}.
\end{equation}
Let $\{\gamma_n^{(0)}\}$ be the ``regular'' sequence defined by
$N_{\mathrm{main}}(\gamma_n^{(0)}) = n$, with Hadamard product
$\Xi_0(s)$.  Define the ``perturbed'' sequence
$\gamma_n = \gamma_n^{(0)} + \delta_n$ with
\begin{equation}\label{eq:perturbation}
  \delta_n = -\frac{cE(\gamma_n^{(0)})}{N_{\mathrm{main}}'
  (\gamma_n^{(0)})},
\end{equation}
and let $\Xi_c(s)$ be the Hadamard product
(cf.~\cite[\S1.10]{Edwards1974})
\begin{equation}\label{eq:xi}
  \Xi_c(s) = \prod_{n=1}^{\infty}\left(1 +
  \frac{(s-\tfrac{1}{2})^2}{\gamma_n^2}\right).
\end{equation}

\begin{lemma}[Well-definedness]\label{lem:hadamard}
For $c$ sufficiently small, $\gamma_n > 0$ for all~$n$,
$\sum\gamma_n^{-2} < \infty$, and $\Xi_c(s)$ is an entire
function of order~$1$.
\end{lemma}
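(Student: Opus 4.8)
The plan is to first determine the asymptotics of the regular sequence $\{\gamma_n^{(0)}\}$, then control the perturbations $\delta_n$ relative to it, and finally read off convergence of the product and its order from the resulting estimate $\gamma_n\asymp n/\log n$. Concretely, I would begin with $N_{\mathrm{main}}$: its derivative $N_{\mathrm{main}}'(T)=\frac{1}{2\pi}\log(T/2\pi)$ is negative on $(0,2\pi)$, vanishes at $2\pi$, and is positive on $(2\pi,\infty)$, while $N_{\mathrm{main}}(2\pi)=-\tfrac18$; hence for each integer $n\ge 1$ the equation $N_{\mathrm{main}}(T)=n$ has a unique root $\gamma_n^{(0)}\in(2\pi,\infty)$, and $n\mapsto\gamma_n^{(0)}$ is strictly increasing to $\infty$. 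Inverting $\frac{T}{2\pi}(\log\frac{T}{2\pi}-1)=n-\tfrac78$ gives $\log\gamma_n^{(0)}=\log n+O(\log\log n)$ and hence $\gamma_n^{(0)}=\frac{2\pi n}{\log n}(1+o(1))$; in particular $\gamma_n^{(0)}\asymp n/\log n$.

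Next I would bound $\delta_n$. By \eqref{eq:D-bound} at $\sigma=\tfrac12$ (fixing some $\varepsilon\in(0,\tfrac12)$), $|E(T)|\le C_\varepsilon(T+4)^{1/2+\varepsilon}$ for all $T\ge 0$ with $C_\varepsilon$ absolute, so since $N_{\mathrm{main}}'>0$ on $(2\pi,\infty)$, \eqref{eq:perturbation} yields
\[
  |\delta_n| \;=\; \frac{c\,|E(\gamma_n^{(0)})|}{N_{\mathrm{main}}'(\gamma_n^{(0)})}
  \;\le\; 2\pi c\,C_\varepsilon\,\gamma_n^{(0)}\,h(\gamma_n^{(0)}),
  \qquad h(T):=\frac{(T+4)^{1/2+\varepsilon}}{T\,\log(T/2\pi)}.
\]
Since $h$ is continuous and positive on $(2\pi,\infty)$ and $h(T)\to 0$ as $T\to\infty$ (because $\varepsilon<\tfrac12$), the quantity $H:=\sup_{T\ge\gamma_1^{(0)}}h(T)$ is finite; choosing $c$ small enough that $2\pi c\,C_\varepsilon H\le\tfrac12$ forces $|\delta_n|\le\tfrac12\gamma_n^{(0)}$ for every $n$, hence $\tfrac12\gamma_n^{(0)}\le\gamma_n\le\tfrac32\gamma_n^{(0)}$. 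In particular $\gamma_n>0$ for all $n$, and $\gamma_n\asymp n/\log n$, so $\sum_n\gamma_n^{-2}\ll\sum_n(\log n)^2 n^{-2}<\infty$.

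Finally I would deduce the analytic statements. From $\sum\gamma_n^{-2}<\infty$ the product \eqref{eq:xi} converges locally uniformly on $\mathbb{C}$ (on $|s|\le R$ the $n$-th factor equals $1+O_R(\gamma_n^{-2})$), so $\Xi_c$ is entire and $\not\equiv 0$ (it equals $1$ at $s=\tfrac12$), with zero set $\{\tfrac12\pm i\gamma_n\}$. For the order, using the elementary bound $\log(1+x)\le C_\eta x^{\eta}$ for all $x\ge 0$ (valid for any fixed $\eta\in(0,1)$), with $r=|s|$,
\[
  \log|\Xi_c(s)| \;\le\; \sum_n\log\!\Bigl(1+\tfrac{(r+1/2)^2}{\gamma_n^2}\Bigr)
  \;\le\; C_\eta\,(r+\tfrac12)^{2\eta}\sum_n\gamma_n^{-2\eta},
\]
and $\sum_n\gamma_n^{-2\eta}<\infty$ once $2\eta>1$; letting $\eta\downarrow\tfrac12$ gives $\log M(r)=O(r^{1+\varepsilon})$ for every $\varepsilon>0$, so $\Xi_c$ has order $\le 1$. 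For the reverse inequality, $\sum_n\gamma_n^{-1}\asymp\sum_n(\log n)/n=\infty$ while $\sum_n\gamma_n^{-1-\eta}<\infty$ for every $\eta>0$, so the exponent of convergence of the zeros is exactly $1$; since this cannot exceed the order, $\Xi_c$ has order exactly $1$.

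The one point requiring care is the smallness threshold for $c$: the bound $|\delta_n|\le\tfrac12\gamma_n^{(0)}$ must be uniform down to the bottom of the spectrum, where $\gamma_n^{(0)}$ is near $2\pi$ and the local density $N_{\mathrm{main}}'(\gamma_n^{(0)})$ is small, so $c$ must be chosen in terms of both $\gamma_1^{(0)}$ and the implied constant in \eqref{eq:D-bound}; everything else is routine infinite-product and Hadamard-theory bookkeeping. It is worth noting that although the zeros have exponent of convergence $1$ and $\sum\gamma_n^{-1}$ diverges, no Weierstrass exponential factors appear: writing the product in the even form $1+(s-\tfrac12)^2/\gamma_n^2$ pairs $+i\gamma_n$ with $-i\gamma_n$, so convergence hinges on $\sum\gamma_n^{-2}$ rather than $\sum\gamma_n^{-1}$, exactly as for $\cos$.
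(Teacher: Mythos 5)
Your proposal is correct and follows essentially the same route as the paper: bound $|\delta_n|$ via the estimate $|E(T)|\ll (T+4)^{1/2+\varepsilon}$ from \eqref{eq:D-bound}, use $\gamma_n^{(0)}\sim 2\pi n/\log n$ to get $\gamma_n\asymp n/\log n$, and conclude convergence of $\sum\gamma_n^{-2}$ and order exactly $1$. You simply fill in details the paper's terse proof leaves implicit (existence and uniqueness of $\gamma_n^{(0)}>2\pi$, a choice of $c$ uniform in $n$ rather than splitting off finitely many small $n$, and the upper-bound/exponent-of-convergence argument pinning the order), all of which is sound.
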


\begin{proof}
Since $|E(T)| = O(|T|^{1/2+\varepsilon})$,
$|\delta_n| = O(c(\gamma_n^{(0)})^{1/2+\varepsilon}/
\log\gamma_n^{(0)}) = o(\gamma_n^{(0)})$, so
$\gamma_n > \gamma_n^{(0)}/2$ for all $n$ sufficiently large.
For the finitely many small~$n$, $\delta_n$ is finite
(since $E(\gamma_n^{(0)})$ converges for each~$n$) and
proportional to~$c$, so taking $c$ small enough ensures
$\gamma_n > 0$ for all~$n$.  Since
$\gamma_n^{(0)} \sim 2\pi n/\log n$,
$\sum\gamma_n^{-2} < \infty$ and the exponent of
convergence equals~$1$.
\end{proof}

We set $Z_c(s) = \Xi_c(s)/\Gamma_\pi(s)$ and
$Z_0(s) = \Xi_0(s)/\Gamma_\pi(s)$,
where $\Gamma_\pi(s) = \tfrac{1}{2}s(s-1)\pi^{-s/2}\Gamma(s/2)$.
Note that $Z_c/Z_0 = \Xi_c/\Xi_0$ (the $\Gamma_\pi$ factors
cancel).

\section{The Spectral Identity}\label{sec:spectral}

The central result relates the Hadamard product ratio
$\Xi_c/\Xi_0$ to the Dirichlet series~$D$.

\begin{proposition}[Spectral Identity]\label{prop:spectral}
Let $\omega_k = \log p_k$ and $\phi_k = k\theta$ as in
\S\ref{sec:construction}.  For $\sigma > 1/2$ and $|t| \ge 2$:
\begin{equation}\label{eq:spectral}
  \re\log\frac{\Xi_c}{\Xi_0}(\sigma+it)
  = -c\sum_k p_k^{-\sigma}\cos(\omega_k t + \phi_k) + O(1).
\end{equation}
Equivalently, there exists a bounded analytic function $R(s)$
with $R(s) = O(1)$ uniformly for $\sigma > 1/2$, $|t|\ge 2$,
such that
\begin{equation}\label{eq:spectral-complex}
  \log\frac{\Xi_c}{\Xi_0}(s) = -c\,D(s) + R(s).
\end{equation}
The cosine sum converges absolutely for $\sigma > 1$ and
conditionally for $\sigma > 0$ (as $\re\,D(\sigma+it)$).
\end{proposition}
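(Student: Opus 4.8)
The plan is to pass from the Hadamard products to the zero‑counting functions, reconstruct $D$ from the oscillation of $N_c$, and control the remainder with the dyadic large sieve. Write $w = s - \tfrac12$. From the elementary identity $\log(1 + w^2/\gamma^2) = \int_\gamma^\infty \tfrac{2w^2}{\tau(\tau^2+w^2)}\,d\tau$ (valid for $\gamma > 0$; for $\sigma > \tfrac12$, $|t|\ge 2$ the kernel has no real pole) one gets, after summing over zeros and interchanging sum and integral (justified by $\sum\gamma_n^{-2} < \infty$),
\[
  \log\frac{\Xi_c}{\Xi_0}(s) = \int_0^\infty \frac{2w^2}{\tau(\tau^2+w^2)}\bigl(N_c(\tau)-N_0(\tau)\bigr)\,d\tau .
\]
Since $N_0(\tau) = \lfloor N_{\mathrm{main}}(\tau)\rfloor$ for $\tau \ge \gamma_1^{(0)}$, and inverting $n = N_{\mathrm{main}}(\gamma_n^{(0)})$ against $\gamma_n = \gamma_n^{(0)} - cE(\gamma_n^{(0)})/N_{\mathrm{main}}'(\gamma_n^{(0)})$ gives, via the mean value theorem for the increasing function $N_{\mathrm{main}}$, the relation $N_c(\tau) = N_{\mathrm{main}}(\tau) + cE(\tau) + \mathcal E(\tau)$, the log‑ratio splits into a \emph{main term} $c\int_{\gamma_1^{(0)}}^\infty \tfrac{2w^2 E(\tau)}{\tau(\tau^2+w^2)}\,d\tau$ and an \emph{error term} $\int \tfrac{2w^2}{\tau(\tau^2+w^2)}\mathcal E(\tau)\,d\tau$, where the correction $\mathcal E$ is governed by $N_{\mathrm{main}}''$ and by the oscillation of $E$ over windows of length $\asymp |\delta_n|$.

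For the main term, substitute $\pi E(\tau) = \im D(\tfrac12+i\tau) = -\sum_k p_k^{-1/2}\sin(\omega_k\tau+\phi_k)$ and interchange sum and integral. Writing $\sin(\omega_k\tau+\phi_k) = \im(e^{i\phi_k}e^{i\omega_k\tau})$ and shifting the contour into the lower half $\tau$‑plane, where $e^{i\omega_k\tau}$ decays, one collects the residue of the kernel at $\tau = -iw$, which under $\tau\mapsto D(\tfrac12+i\tau)$ sits at $s$; the closed form $\int_0^\infty \tfrac{2w^2\sin(\omega_k\tau)}{\tau(\tau^2+w^2)}\,d\tau = \pi(1-p_k^{-w})$ then exhibits the $k$‑th integral as $\pi\,\im\!\bigl(e^{i\phi_k}(1-p_k^{-w})\bigr)$ plus a $k$‑remainder from the finite cutoff $\gamma_1^{(0)}$ (tails $\mathrm{Ci}(\omega_k\gamma_1^{(0)})$ and $\tfrac\pi2 - \mathrm{Si}(\omega_k\gamma_1^{(0)})$) together with a $p_k^{-(\sigma-1/2)}$‑weighted piece. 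Summing against $p_k^{-1/2}$ reassembles exactly $-cD(s)$ modulo the error class, with the $1-p_k^{-w}$ structure producing conditional convergence for $\sigma > 0$ and absolute convergence for $\sigma > 1$. One must also show that the companion pole at $\tau = +iw$ (a $D(1-s)$‑type contribution) and the conjugate half $\overline{D(\tfrac12+i\tau)}$ contribute only to the error class; this uses the symmetries $\Xi_c(s)=\Xi_c(1-s)$ and $\overline{\Xi_c(s)}=\Xi_c(\bar s)$, and the bound on $R(s)$ is to be read, as is customary for $\log$ of such products, away from $o(1/\log|t|)$‑neighbourhoods of the points $\tfrac12\pm i\gamma_n^{(0)}$.

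The main obstacle is the error term. Both $\mathcal E$ and the linearization remainder — from replacing the exact $\log(1+w^2/\gamma_n^2)-\log(1+w^2/(\gamma_n^{(0)})^2)$ by its first‑order term in $\delta_n$ — reduce, after a dyadic split $\gamma_n^{(0)}\asymp 2^j$, to second moments $\sum_{2^j<\gamma_n^{(0)}\le 2^{j+1}}|E(\gamma_n^{(0)})|^2$ and the analogous sums over consecutive differences $|E(\gamma_n^{(0)})-E(\gamma_{n+1}^{(0)})|^2$. Since $\pi E$ is the imaginary part of the Dirichlet series $\sum_k p_k^{-1/2-i\tau}$ twisted by the bounded‑partial‑sum coefficients $e^{-ik\theta}$, sampled at the well‑spaced points $\gamma_n^{(0)}$ (gaps $\asymp 1/\log\gamma_n^{(0)}$), a large‑sieve inequality yields $\sum_{2^j<\gamma_n^{(0)}\le 2^{j+1}}|E(\gamma_n^{(0)})|^2 \ll 2^j\log\log 2^j$; feeding the dyadic blocks (the kernel weight being $\asymp|w|^2 2^{-3j}$ for $2^j\gg|t|$ and $\asymp 1$ per block at $2^j\asymp|t|$) back into the integral gives $O(1)$. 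This step is unavoidable: the worst‑case bound $|\delta_n|\ll(\gamma_n^{(0)})^{1/2+\varepsilon}/\log\gamma_n^{(0)}$ exceeds the inter‑zero spacing, so any bound ignoring cancellation grows in $|t|$; only the square‑root cancellation of the large sieve — in effect replacing $|\delta_n|^2$ by its dyadic average $\asymp c^2(\log\log\gamma_n)/\log^2\gamma_n$ — recovers uniform boundedness. The dyadic large‑sieve estimate, together with the residue evaluation of the main term, is thus the heart of the proof.
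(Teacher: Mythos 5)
Your proposal follows essentially the same route as the paper's proof: you express $\log(\Xi_c/\Xi_0)$ as the kernel $\tfrac{2w^2}{\tau(\tau^2+w^2)}$ integrated against the perturbation $cE$ of the zero-counting function, evaluate the main term with the same classical sine integrals to obtain $-c\,D(s)$ plus bounded non-oscillatory terms, and control the linearization/error contribution by a dyadic large-sieve bound at the well-spaced points $\gamma_n^{(0)}$, which is exactly the content of the paper's Lemma~\ref{lem:lin}. The remaining differences are cosmetic (a Stieltjes formulation via $N_c-N_0$ in place of term-by-term Taylor linearization plus Euler--Maclaurin, and a direct complex identity rather than the paper's Borel--Carath\'eodory step), and your caveat about small neighbourhoods of $\tfrac12\pm i\gamma_n^{(0)}$ is a fair reading of the claimed uniformity near $\sigma=\tfrac12$.
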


\begin{proof}
We establish~\eqref{eq:spectral} first, then
deduce~\eqref{eq:spectral-complex} by analyticity.
Throughout, $c$ is fixed, so all implicit constants may
depend on~$c$ and~$\theta$.

\textbf{Step 1 (Linearization).}
By~\eqref{eq:perturbation}, $\gamma_n = \gamma_n^{(0)} + \delta_n$.
Each Hadamard factor satisfies:
\[
  \log\frac{1+(s-\tfrac{1}{2})^2/\gamma_n^2}
  {1+(s-\tfrac{1}{2})^2/(\gamma_n^{(0)})^2}
  = \frac{-2w^2\delta_n}{\gamma_n^{(0)}((\gamma_n^{(0)})^2+w^2)}
  + O(\delta_n^2/(\gamma_n^{(0)})^2)
\]
where $w = s - 1/2$.

\textbf{Step 2 (Sum to integral).}
For each~$K$, let $\Xi_{c,K}$ be the Hadamard product with zeros
perturbed using $E_K$ (first $K$ terms of~$E$).
Since $|E_K| \le \frac{1}{\pi}\sum_{k=1}^K p_k^{-1/2}$
(a constant for each~$K$), the perturbations are
$\delta_{n,K} = O(1/\log\gamma_n^{(0)})$, giving convergent
quadratic remainder
$\sum\delta_{n,K}^2/(\gamma_n^{(0)})^2 < \infty$.
The linearized sum is a Riemann sum for
$\int \frac{2w^2\,cE_K(u)}{u(u^2+w^2)}\,du$
(one term per zero, spacing $\sim 1/N_{\mathrm{main}}'$).
The integrand is smooth for $u > 0$ and $\sigma > 1/2$
(the poles of $1/(u^2+w^2)$ lie off the real axis),
so by Euler--Maclaurin:
\begin{equation}\label{eq:integral}
  \re\log\frac{\Xi_{c,K}(s)}{\Xi_0(s)}
  = -c\int_{\gamma_1^{(0)}}^\infty
  \re\frac{2w^2\,E_K(u)}{u(u^2+w^2)}\,du + O(1).
\end{equation}

\textbf{Step 3 (Evaluation of the integral).}
Using the partial fraction
$\frac{2w^2}{u(u^2+w^2)} = \frac{2}{u} - \frac{2u}{u^2+w^2}$
and the standard integrals
$\int_0^\infty \frac{\sin\omega u}{u}\,du = \frac{\pi}{2}$
(Dirichlet) and
$\int_0^\infty \frac{u\sin\omega u}{u^2+\beta^2}\,du
= \frac{\pi}{2}e^{-\omega\beta}$
for $\omega > 0$, $\re\beta > 0$
(\cite[\S3.723]{GR07}, extended to complex~$\beta$
by analytic continuation), the contribution of each
component $\sin(\omega_k u+\phi_k)$ decomposes into
convergent pieces.%
\footnote{The individual integrals
$\int\sin(\omega u+\phi)/u\,du$ and
$\int u\sin(\omega u+\phi)/(u^2+w^2)\,du$ each diverge at
$u=0$ when $\sin\phi\ne 0$ (from the $\cos(\omega u)/u$
term), but the divergent parts cancel in the partial-fraction
difference.}
The oscillatory contribution of the $k$-th component is
$-\pi\,p_k^{-(\sigma-1/2)}\cos(\omega_k t+\phi_k)$
(using $e^{-\omega_k(\sigma-1/2)} = p_k^{-(\sigma-1/2)}$
and the angle-addition identity); the non-oscillatory part
$\pi\cos\phi_k(1-p_k^{-(\sigma-1/2)})$ sums to a bounded
quantity (convergent by Abel summation, depending on~$\sigma$
but not on~$t$).

Substituting into~\eqref{eq:integral} with the amplitudes
$-1/(\pi p_k^{1/2})$ from~$E_K$, the $k$-th term contributes
$-c\,p_k^{-\sigma}\cos(\omega_k t + \phi_k)$.
Including the bounded lower-limit correction
$\int_0^{\gamma_1^{(0)}}(\cdots)\,du$:
\begin{equation}\label{eq:real-part-K}
  \re\log\frac{\Xi_{c,K}(s)}{\Xi_0(s)}
  = -c\sum_{k=1}^K p_k^{-\sigma}\cos(\omega_k t + \phi_k)
  + O(1) + O\Bigl(\sum_n\delta_{n,K}^2/(\gamma_n^{(0)})^2\Bigr),
\end{equation}
where the $O(1)$ accounts for the non-oscillatory Fourier
terms, the lower-limit correction, and the Euler--Maclaurin
error (all independent of~$K$), while the final term is the
quadratic Taylor remainder from Step~1.
For each fixed~$K$ this remainder is finite, but it grows
with~$K$ (since $M_K$ grows).

Taking $K\to\infty$: the cosine sum converges
(absolutely for $\sigma > 1$; conditionally for
$\sigma > 0$), and $\Xi_{c,K}\to\Xi_c$ (since the
zero sequences converge).  The only obstruction to
passing~\eqref{eq:real-part-K} to the limit is the
Taylor remainder: for the full (non-truncated) series,
$\delta_n = -cE(\gamma_n^{(0)})/N_{\mathrm{main}}'(\gamma_n^{(0)})$
and $|E(\gamma_n^{(0)})|$ is no longer bounded by a
fixed constant (its RMS grows like $\sqrt{\log\log n}$
by~\eqref{eq:E-meansq}).  The quadratic remainder
$\sum\delta_n^2/(\gamma_n^{(0)})^2$ converges iff
$\sum E(\gamma_n^{(0)})^2/n^2 < \infty$, which is the
content of Lemma~\ref{lem:lin} below.
This establishes~\eqref{eq:spectral}.

\textbf{Step 4 (Complex extension).}
Equation~\eqref{eq:spectral} controls the real part of
$\log(\Xi_c/\Xi_0) + cD$; we now upgrade to the full complex
identity.  Define $F(s) = \log(\Xi_c(s)/\Xi_0(s)) + cD(s)$.
Both terms are analytic for $\sigma > 1/2$: the Hadamard
ratio $\Xi_c/\Xi_0$ is nonvanishing there (all zeros of
$\Xi_0$ and~$\Xi_c$ lie on $\sigma = 1/2$), and $D$ converges
for $\sigma > 0$.  By~\eqref{eq:spectral} (proved above
without reference to~\eqref{eq:spectral-complex}),
$\re F(s) = O(1)$ for $\sigma > 1/2$, $|t| \ge 2$.
Evaluating at $s = 2$: $D(2)$ converges absolutely, and
$\log(\Xi_c(2)/\Xi_0(2))$ is finite (each Hadamard factor
at $s = 2$ differs from~$1$ by $O(\delta_n/\gamma_n^2)$,
and $\sum|\delta_n|/\gamma_n^2 < \infty$), so
$F(2) = O(1)$.  Since $F$ is analytic with bounded real part,
the Borel--Carath\'eodory theorem gives $|F(s)| = O(1)$ for
$\sigma > 1/2$.  Setting $R(s) = F(s)$
gives~\eqref{eq:spectral-complex}.
\end{proof}

\subsection{Linearization convergence}

The following lemma ensures that the Taylor linearization
in Step~1 has convergent remainder for the full (non-truncated)
series, which is the key input for passing the spectral
identity from finite truncations to~$\Xi_c$ itself.

\begin{lemma}\label{lem:lin}
\begin{equation}\label{eq:lin-cond}
  \sum_{n=1}^\infty
  \frac{E(\gamma_n^{(0)})^2}{n^2} < \infty.
\end{equation}
\end{lemma}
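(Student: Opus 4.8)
The plan is to estimate the sum by a dyadic decomposition in $n$, reducing the bound to a mean-square estimate for $E$ over blocks of $\gamma_n^{(0)}$-values, and then to control that mean-square by a large-sieve inequality applied to the Dirichlet polynomial $\sum_{p_k\le x} e^{-ik\theta} p_k^{-1/2} e^{iT\omega_k}$. First I would split the sum over $n$ into dyadic ranges $n\in[2^j, 2^{j+1})$; on each range $1/n^2 \asymp 4^{-j}$, so it suffices to show that $\sum_{n\in[2^j,2^{j+1})} E(\gamma_n^{(0)})^2 = O(4^j / j)$ or even $O(4^j)$ with a summable-in-$j$ constant — any bound of the form $O(2^j \cdot \mathrm{poly}(j))$ would suffice after multiplying by $4^{-j}$, since $\sum_j 2^{-j}\mathrm{poly}(j) < \infty$. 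Using $\gamma_n^{(0)} \sim 2\pi n/\log n$ and the fact that $N_{\mathrm{main}}(\gamma_n^{(0)}) = n$, the points $\gamma_n^{(0)}$ for $n$ in a dyadic block are roughly equally spaced with gap $\asymp 1/N_{\mathrm{main}}'(\gamma_n^{(0)}) \asymp 1/\log\gamma_n^{(0)}$, lying in an interval of length $\asymp 2^j \log(2^j)/\log(2^j) \asymp 2^j$; so the block sum of $E(\gamma_n^{(0)})^2$ is comparable to $\log(2^j) \int_{T_j}^{2T_j} E(u)^2\,du$ where $T_j \asymp 2^j$. This reduces everything to a continuous mean-square bound $\int_T^{2T} E(u)^2\,du = O(T \log\log T)$, which is exactly the analogue of the classical $L^2$ estimate for $S(T)$ and which I will abbreviate as~\eqref{eq:E-meansq} (the estimate referenced but not yet displayed in the excerpt).

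The core of the argument is therefore the mean-square bound for $E$. Writing $E(u) = -\frac1\pi \sum_k p_k^{-1/2}\sin(u\omega_k + \phi_k)$, I would expand the square and integrate termwise, getting diagonal terms $\frac{1}{2\pi^2}\sum_{p_k \le \text{(effective cutoff)}} p_k^{-1} \cdot T$ — which is $\asymp T\log\log T$ by Mertens — plus off-diagonal terms $\sum_{j\ne k} p_j^{-1/2}p_k^{-1/2}\int_T^{2T}\sin(u\omega_j+\phi_j)\sin(u\omega_k+\phi_k)\,du$. The off-diagonal integral is $O(1/|\omega_j - \omega_k|) = O(1/|\log(p_j/p_k)|)$, and summing these against the weights is a standard computation (as in the proof of $\int_0^T S(u)^2\,du \ll T\log\log T$ in Titchmarsh): the contribution of pairs with $p_j, p_k$ in the same dyadic interval is controlled by $\sum_{p}\frac{\log^2 p}{p} \cdot \frac{1}{\Delta}$-type sums, and pairs in different dyadic intervals give $|\log(p_j/p_k)| \gg 1$ and hence a rapidly convergent double sum. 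The conditional convergence of $E(u)$ for each $u$ (already established via Abel summation in~\eqref{eq:E}) lets me justify the termwise integration by first working with a partial sum $E_M$ and controlling the tail: by~\eqref{eq:D-bound} the tail $E - E_M$ is small in a suitable averaged sense, or more simply one can truncate at $p_k \le T^{100}$ and absorb the rest using the pointwise bound $|E(u)| = O(u^{1/2+\varepsilon})$ on a negligible exceptional set.

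The main obstacle I anticipate is handling the interchange of the infinite sum defining $E$ with the integration over $[T, 2T]$ rigorously — because $E$ is only conditionally convergent, one cannot blindly expand $E(u)^2$ as a doubly-infinite sum and integrate term by term. I would address this by the truncation strategy above: fix a cutoff $x = x(T)$ (a large power of $T$), write $E = E_{\le x} + E_{>x}$, bound $\int_T^{2T} E_{\le x}(u)^2\,du$ by the finite large-sieve / pair-correlation computation (which is uniform in the number of terms), and bound the cross term and the tail term $\int_T^{2T} E_{>x}(u)^2\,du$ using the subconvexity bound~\eqref{eq:D-bound} together with the near-orthogonality of $\{u\omega_k\}$ for $p_k > x$, so that the tail contributes $o(T)$. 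A secondary technical point is that the dyadic-block discretization $\sum_{n\in[2^j,2^{j+1})} E(\gamma_n^{(0)})^2 \asymp \log(T_j)\int_{T_j}^{2T_j}E(u)^2\,du$ requires knowing that $E$ does not oscillate too wildly between consecutive $\gamma_n^{(0)}$; this follows from $|E'(u)| = O(u^{1/2+\varepsilon})$ (differentiating~\eqref{eq:D-bound} via Cauchy's estimate on a disc of radius $\asymp 1$) combined with the gap bound $\gamma_{n+1}^{(0)} - \gamma_n^{(0)} = O(1/\log n)$, so the sampled sum and the integral agree up to a lower-order error. Assembling the dyadic pieces then yields $\sum_n E(\gamma_n^{(0)})^2/n^2 \ll \sum_j 4^{-j}\cdot 2^j \log(2^j)\log\log(2^j) \ll \sum_j j\cdot 2^{-j}\log j < \infty$, which is~\eqref{eq:lin-cond}.
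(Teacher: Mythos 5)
Your dyadic skeleton (blocks $n\in[2^j,2^{j+1})$, a head/tail truncation of $E$, head by an $L^2$/large-sieve estimate, tail by the conditional-convergence mechanism) is the same as the paper's, but two of your steps fail as stated. First, the discretization: you pass from $\sum_{n\in[2^j,2^{j+1})}E(\gamma_n^{(0)})^2$ to $\log T_j\int_{T_j}^{2T_j}E(u)^2\,du$ using only the pointwise bound $|E'(u)|=O(u^{1/2+\varepsilon})$ and the gap $\asymp 1/\log n$. That allows $E$ to move by $O(u^{1/2+\varepsilon}/\log u)$ between consecutive sample points, which is astronomically larger than the typical size of $E$ (RMS $\asymp\sqrt{\log\log}$, cf.~\eqref{eq:E-meansq}), so it does not show the sampled values track the local mean of $E^2$; ``lower-order error'' is unjustified. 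A rigorous version needs a Gallagher/Sobolev-type sampling inequality $\sum_n|f(t_n)|^2\ll\delta^{-1}\int|f|^2+\bigl(\int|f|^2\bigr)^{1/2}\bigl(\int|f'|^2\bigr)^{1/2}$ applied to a truncation $E_{\le x}$, hence an $L^2$ bound on $E_{\le x}'$ as additional input. (The paper avoids this entirely by applying the large sieve directly at the $\delta$-spaced points $\gamma_n^{(0)}$, so no continuous integral or derivative control is needed.)

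Second, and more seriously, the claimed estimate $\int_T^{2T}E_{\le x}(u)^2\,du=O(T\log\log T)$ with $x=T^{100}$ is not obtainable by your computation, and the parenthetical ``uniform in the number of terms'' is exactly where it breaks. Once $p\asymp q$ exceed roughly $T$, adjacent primes have $|\log(p/q)|\ll 1/T$, so the off-diagonal integral over $[T,2T]$ is $\asymp T$ with no saving; by the Montgomery--Vaughan mean value theorem the off-diagonal/error contribution with coefficients $p^{-1/2}$ is of order $\sum_{p\le x}d_p^{-1}\asymp x/\log^2x$ ($d_p$ the prime gap), which for $x=T^{100}$ is $\approx T^{100}/\log^2T$ --- this swamps both $T\log\log T$ and the $O(2^j\operatorname{poly}(j))$ block bound you actually need, so the final assembly diverges. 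Moreover the tail $E_{>x}$ is controlled only by the Abel-summation bound $\ll(1+u)\,x^{-1/2}$ (your ``negligible exceptional set'' alternative has no content, since the pointwise bound $O(u^{1/2+\varepsilon})$ holds everywhere and is too large), so you would need $x\gg T^2$ for the tail but $x\ll T\operatorname{poly}(\log T)$ for the off-diagonal: no cutoff delivers the $S(T)$-style bound $O(T\log\log T)$ with these tools. The repair is to abandon that intermediate target: choose the cutoff in the window $T^{1+\varepsilon}\lesssim x\lesssim T^{2-\varepsilon}$ (the paper takes $K_j=\lceil2^{1.1j}\rceil$, i.e.\ $p_{K_j}\approx T^{1.1}$), aim only for a block bound of size $o(4^j/j)$ rather than $T\operatorname{poly}(\log T)$, bound the head by the large-sieve/mean-value estimate at that cutoff, and bound the tail pointwise by Abel summation --- which is precisely the paper's argument.
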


\begin{proof}
Decompose $\mathbb{N}$ into dyadic blocks $I_j = [2^j, 2^{j+1})$
for $j\ge 1$.  It suffices to show
$\sum_j S_j < \infty$ where
$S_j = \sum_{n\in I_j}E(\gamma_n^{(0)})^2/n^2$.
For each~$j$, set $K_j = \lceil 2^{1.1\,j}\rceil$ and split
$E = E_{K_j} + R_{K_j}$, where $E_{K_j}$ is the first
$K_j$ terms. By $(a+b)^2 \le 2a^2+2b^2$:
$S_j \le 2(\text{HEAD}_j + \text{TAIL}_j)$.

\textbf{Head (large sieve).}
$E_{K_j}$ is a trigonometric polynomial with $K_j$ terms and
frequency range $\Lambda_j = \omega_{K_j}-\omega_1 = O(j)$.
The zeros $\gamma_n^{(0)}$ have minimum spacing
$\Delta \ge c_1/j$ for $n\le 2^{j+1}$.
By the large sieve inequality for trigonometric
polynomials at well-spaced points
(cf.~\cite[Thm.~1]{Montgomery1994},
\cite[\S7]{MV07}):
\[
  \textstyle\sum_{n\le 2^{j+1}}\!|E_{K_j}(\gamma_n^{(0)})|^2
  \le (\pi/\Delta + \Lambda_j)\,
  \tfrac{1}{\pi^2}\sum_{k\le K_j}p_k^{-1}
  \le C_1\,j\log j,
\]
since $\sum_{k\le K_j}p_k^{-1}\sim\log\log p_{K_j}
\sim\log j$ (Mertens).  Therefore
$\text{HEAD}_j \le C_1\,j\log j\,/\,4^j$, and
$\sum_j j\log j/4^j < \infty$.

\textbf{Tail (Abel summation).}
The tail $\sum_{k>K}(\omega_{k+1}-\omega_k)/p_k^{1/2}$
is a left Riemann sum for the decreasing function
$e^{-\omega/2}$.  By Bertrand's postulate
(mesh width $\le\log 2$):
$\sum_{k>K}(\omega_{k+1}-\omega_k)/p_k^{1/2}
\le \sqrt{2}\int_{\omega_K}^\infty
e^{-\omega/2}\,d\omega = 2\sqrt{2}\,p_K^{-1/2}$.
Hence $|R_{K_j}(\gamma_n^{(0)})| \le
C_2\,2^{j(1-\varepsilon)/2}/j^{3/2}$
(with $\varepsilon = 0.1$, using
$\gamma_n \le C_3\,2^j/j$ and
$\sqrt{p_{K_j}} \ge C_4\,2^{(1+\varepsilon)j/2}\sqrt{j}$).
Therefore
$\text{TAIL}_j \le C_5/(j^3\cdot 2^{\varepsilon j})$,
and $\sum_j 1/(j^3\cdot 2^{\varepsilon j}) < \infty$.

Both contributions decay exponentially, so
$\sum S_j < \infty$.
\end{proof}

\subsection{The base function $Z_0$}

\begin{lemma}\label{lem:Z0}
$\log|Z_0(\sigma+it)| = O(1)$ uniformly for $\sigma \ge 1$ and
$|t|\ge 2$.  In particular, $|Z_0|$ is both bounded and
bounded away from zero in this region.
\end{lemma}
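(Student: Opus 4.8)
The plan is to establish the sharper statement $\log\Xi_0(s)-\log\Gamma_\pi(s)=O(1)$ throughout $\sigma\ge 1$, $|t|\ge 2$; taking real parts then gives both the uniform upper bound and the bound away from zero. Since $\Xi_0$ is nonvanishing for $\sigma>1/2$, while $\Gamma_\pi$ has neither zeros nor poles in $\sigma\ge 1$ apart from the simple zero at $s=1$ (which $|t|\ge 2$ excludes), $\log|Z_0|$ is continuous on the compact set $\{\sigma\ge 1,\ |t|\ge 2,\ |s|\le R\}$, so it suffices to treat $|s|$ large. On the gamma side, Stirling yields $\log\Gamma_\pi(s)=\tfrac s2\log\tfrac{s}{2\pi}-\tfrac s2+\tfrac32\log s+O(1)$ uniformly for $\sigma\ge 1$; the goal is to show that $\log\Xi_0(s)$ has the same asymptotic expansion.

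For $\log\Xi_0$ set $w=s-\tfrac12$ and $h(u,s)=\log\!\bigl(1+w^2/u^2\bigr)$, single‑valued and analytic for $\sigma>1/2$ since $1+w^2/u^2$ avoids $(-\infty,0]$ there. Writing the Hadamard sum as a Stieltjes integral and integrating by parts (the boundary terms vanish because $h\to 0$ as $u\to\infty$ and the counting function is $0$ below $\gamma_1^{(0)}$), $\log\Xi_0(s)=-\int_{\gamma_1^{(0)}}^{\infty} N(u)\,\partial_u h(u,s)\,du$ with $N(u)=\lfloor N_{\mathrm{main}}(u)\rfloor$ for $u\ge\gamma_1^{(0)}$. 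Split $N=N_{\mathrm{main}}-\{N_{\mathrm{main}}\}$. The smooth part, after integrating by parts back (using $N_{\mathrm{main}}'(u)=\tfrac1{2\pi}\log\tfrac u{2\pi}$ and $N_{\mathrm{main}}(\gamma_1^{(0)})=1$) and extending the lower limit to $0$ with an explicit correction, reduces to $\int_0^{\infty}\tfrac1{2\pi}\log\tfrac u{2\pi}\,h(u,s)\,du$, which evaluates in closed form from $\int_0^{\infty}\log(1+w^2/u^2)\,du=\pi w$ and $\int_0^{\infty}x^{a}\log(1+1/x^2)\,dx=\pi/\bigl((a+1)\cos(\pi a/2)\bigr)$ (differentiated at $a=0$) to $\tfrac w2\log\tfrac w{2\pi}-\tfrac w2$. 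The oscillatory part $\int_{\gamma_1^{(0)}}^{\infty}\{N_{\mathrm{main}}(u)\}\,\partial_u h(u,s)\,du$ is handled by writing $\{N_{\mathrm{main}}\}=\tfrac12+\psi(N_{\mathrm{main}}(u))$ with $\psi(x)=\{x\}-\tfrac12$: the $\psi$‑term is $O(1)$ after the substitution $v=N_{\mathrm{main}}(u)$ (legitimate since $N_{\mathrm{main}}'\ge c_1>0$ on $[\gamma_1^{(0)},\infty)$), one integration by parts against the bounded antiderivative of $\psi$, and the uniform estimate $\int_{\gamma_1^{(0)}}^{\infty}\bigl(|\partial_u^2 h|+|\partial_u h|/u\bigr)\,du=O(1)$, which survives even near $u\approx|t|$ because $|u^2+w^2|\ge|t|$ there.

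The main obstacle is the final reconciliation of the $\log|s|$‑order terms. Collecting the pieces, $\log\Xi_0(s)$ equals $\tfrac w2\log\tfrac w{2\pi}-\tfrac w2$ plus a specific combination of the endpoint term $h(\gamma_1^{(0)},s)=2\log w+O(1)$, the $\int_0^{\gamma_1^{(0)}}$ correction (whose coefficient of $\log w$ is $\tfrac1\pi\int_0^{\gamma_1^{(0)}}\log\tfrac u{2\pi}\,du=\tfrac14$, via the defining equation $N_{\mathrm{main}}(\gamma_1^{(0)})=1$), and the $-\tfrac12 h(\gamma_1^{(0)},s)$ produced by the mean value of $\{N_{\mathrm{main}}\}$; after rewriting $w=s-\tfrac12$ so that $\tfrac w2\log\tfrac w{2\pi}$ itself absorbs a $-\tfrac14\log s$, one must verify that the net coefficient of $\log s$ equals the $\tfrac32$ of Stirling. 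This bookkeeping is the delicate point, because a bounded but persistent shift in the counting function — here the mean‑$\tfrac12$ defect of $\lfloor N_{\mathrm{main}}\rfloor$ — displaces $\log|\Xi_0|$ by a full $\log|s|$; a naive accounting gives coefficient $\tfrac12$ rather than $\tfrac32$, so the argument hinges on the precise calibration of $N_{\mathrm{main}}$ (its additive constant $\tfrac78$ together with the indexing $N_{\mathrm{main}}(\gamma_n^{(0)})=n$), and I would scrutinize this step first — most likely the construction should index by $N_{\mathrm{main}}(\gamma_n^{(0)})=n-\tfrac12$ so that the rounding error is mean‑zero and the $\log|s|$‑terms cancel.
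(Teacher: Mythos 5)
Your route is the same as the paper's (the paper compresses it to three lines: counting function, Euler--Maclaurin, Stirling), but the bookkeeping concern you raise at the end is not a loose end to ``scrutinize'' --- it is decisive, and it resolves \emph{against} the lemma as stated. With the paper's indexing $N_{\mathrm{main}}(\gamma_n^{(0)})=n$, the counting function of the unperturbed zeros is $\lfloor N_{\mathrm{main}}(u)\rfloor$, which sits a mean $\tfrac12$ below $N_{\mathrm{main}}(u)$. Carrying out exactly your computation, Euler--Maclaurin gives
$\log|\Xi_0(s)|=\int_{\gamma_1^{(0)}}^{\infty}h(u)\,N_{\mathrm{main}}'(u)\,du+\tfrac12 h(\gamma_1^{(0)})+O(1)$
with $h(u)=\log|1+w^2/u^2|$; the paper's displayed formula silently drops the endpoint term $\tfrac12h(\gamma_1^{(0)})=\log|w|+O(1)$, which is precisely the unbounded footprint of that mean-$\tfrac12$ deficit. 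Assembling the pieces as you indicate ($\int_0^\infty hN_{\mathrm{main}}'\,du=\re[\tfrac w2\log\tfrac w{2\pi}-\tfrac w2]$, the $\int_0^{\gamma_1^{(0)}}$ correction $\tfrac14\log|w|$ coming from $N_{\mathrm{main}}(0^+)=\tfrac78$, and the $-\tfrac14\log|s|$ from rewriting $w=s-\tfrac12$) yields $\log|\Xi_0(s)|=\re[\tfrac s2\log\tfrac s{2\pi}-\tfrac s2]+\tfrac12\log|s|+O(1)$, whereas Stirling gives $\log|\Gamma_\pi(s)|=\re[\tfrac s2\log\tfrac s{2\pi}-\tfrac s2]+\tfrac32\log|s|+O(1)$. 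Hence in fact $\log|Z_0(\sigma+it)|=-\log|s|+O(1)$: the upper bound in the lemma survives, but ``bounded away from zero'' is false. A clean sanity check is the $\sin$/$\cos$ pair: integer-indexed zeros of unit density give $\prod(1-z^2/n^2)=\sin(\pi z)/(\pi z)$, a factor $z$ smaller than the half-integer-indexed $\cos(\pi z)$ --- the same half-spacing miscalibration you detected.

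So your proposal does not (and cannot) close the argument for the statement as written, but your diagnosis and repair are correct: defining $\gamma_n^{(0)}$ by $N_{\mathrm{main}}(\gamma_n^{(0)})=n-\tfrac12$ makes the rounding error mean-zero, cancels the $\log|s|$ terms, and renders the lemma true with essentially your proof (your treatment of the oscillatory part --- substitution $v=N_{\mathrm{main}}(u)$, one integration by parts against the bounded antiderivative of $\psi$, and $\int(|\partial_u^2h|+|\partial_u h|/u)\,du=O(1)$ using $|u^2+w^2|\ge 2(\sigma-\tfrac12)|t|\ge|t|$ --- is sound and is exactly what the paper's ``Euler--Maclaurin'' step needs). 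Note the error is not harmless downstream: part~(e) of Theorem~\ref{thm:main} uses the lower bound $|Z_0(1+it)|\gg 1$ to pass from unboundedness of $\Xi_c/\Xi_0$ on $\sigma=1$ to unboundedness of $Z_c$, so the re-indexing (or some compensation for the $1/|s|$ decay of $Z_0$) is required there as well.
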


\begin{proof}
By construction, $\Xi_0$ has zero-counting function
$N_0(T) = N_{\mathrm{main}}(T)$ (no error term).
By Euler--Maclaurin, $\log|\Xi_0(s)| =
\int_{\gamma_1^{(0)}}^\infty
\log|1+w^2/u^2|\,N_{\mathrm{main}}'(u)\,du + O(1)$.
Stirling's formula identifies this integral with
$\log|\Gamma_\pi(s)| + O(1)$,
giving $\log|Z_0| = \log|\Xi_0/\Gamma_\pi| = O(1)$.
The second claim follows since $|\!\log|Z_0|\!| \le C$
implies $e^{-C} \le |Z_0| \le e^C$.
\end{proof}

\section{Proof of the Main Theorem}\label{sec:proof}

\begin{proof}[Proof of Theorem~\ref{thm:main}]
Parts~(a)--(c) hold by construction;
(d) follows from the spectral identity
(Proposition~\ref{prop:spectral}) with linearization error
controlled by Lemma~\ref{lem:lin}; and (e) via a
Phragm\'en--Lindel\"of argument below.

The function $\Xi_c$ is the Hadamard product~\eqref{eq:xi} with zeros
$\gamma_n = \gamma_n^{(0)} + \delta_n$ defined via~$E$
as in~\eqref{eq:perturbation}.  By Lemma~\ref{lem:hadamard}, $\Xi_c$
is an entire function of order~$1$.

Part~(a): by construction, zeros lie at $s = 1/2\pm i\gamma_n$.
Part~(b): $\Xi_c(1-s) = \Xi_c(s)$ (since $(1-s-1/2)^2 = (s-1/2)^2$).

Part~(c): the counting function $N_c(T) = \#\{n : \gamma_n \le T\}$
satisfies $|N_c(T) - N_{\mathrm{main}}(T)| = O(\log T)$.
Indeed, zeros near height~$T$ have index $n \sim T\log T/(2\pi)$
and perturbation $|\delta_n| = O(T^{1/2+\varepsilon}/\log T)$
by~\eqref{eq:D-bound}.  In an interval of this length around~$T$,
$N_{\mathrm{main}}$ changes by
$O(N_{\mathrm{main}}'(T)\cdot T^{1/2+\varepsilon}/\log T)
= O(T^{1/2+\varepsilon}/\log^2 T) = o(\log T)$.

\textbf{Part (d) (Boundedness for $\sigma > 1$).}
The spectral identity
(Proposition~\ref{prop:spectral}) gives
\[
  \log|Z_c(\sigma+it)| = \re\log\frac{\Xi_c}{\Xi_0}(s)
  + \log|Z_0(s)|
  \le c\,P(\sigma) + O(1)
\]
for all $\sigma > 1$, $|t|\ge 2$
(using $|\sum p_k^{-\sigma}\cos(\cdots)| \le P(\sigma)$
and Lemma~\ref{lem:Z0}).

\textbf{Part (e) (Unboundedness on $\sigma = 1$).}
We show $\sup_t|\Xi_c/\Xi_0(1+it)| = +\infty$ by a
Phragm\'en--Lindel\"of argument
(cf.~\cite[Thm.~8.4]{Titchmarsh1986}).

\emph{Step 1.} $\Xi_c/\Xi_0$ is unbounded in
$\{\sigma > 1,\;|t|\ge 2\}$.  For any~$N$, by Dirichlet's
approximation theorem there exists
$t \le (2\pi/\varepsilon)^N$ with
$|\omega_k t + \phi_k - \pi| < \varepsilon\pmod{2\pi}$
for all $k\le N$.  At $\sigma = 1 + 1/\log N$:
\[
  \re\log\frac{\Xi_c}{\Xi_0}(\sigma\!+\!it)
  \ge c\sum_{k=1}^N p_k^{-\sigma} - O(c)
  \ge \tfrac{c}{e}\log\log N - O(1),
\]
which grows without bound as $N\to\infty$
(since $p_k^{-1/\log N} = e^{-\log p_k/\log N} \ge e^{-1}$
for $p_k \le N$, so
$\sum_{p\le N}p^{-1-1/\log N} \ge e^{-1}\sum_{p\le N}p^{-1}
\sim e^{-1}\log\log N$).

\emph{Step 2.}  Suppose $|\Xi_c/\Xi_0(1+it)| \le M$ for
all~$t$.  Since $\Xi_c$ and~$\Xi_0$ are entire of order~$1$,
$|\Xi_c(\sigma+it)/\Xi_0(\sigma+it)|
\le \exp(C|t|^{1+\varepsilon})$ for any $\varepsilon > 0$
(the ratio is analytic in $\sigma \ge 1$ since all zeros of
$\Xi_0$ lie on $\sigma = 1/2$).  The
Phragm\'en--Lindel\"of principle for the strip
$1\le\sigma\le 2$---bounded on $\sigma = 1$ (by assumption)
and on $\sigma = 2$ (by part~(d)), with at most exponential
growth---gives boundedness throughout.
This contradicts Step~1.

Therefore $\Xi_c/\Xi_0$ is unbounded on $\sigma = 1$,
and since $Z_c/Z_0 = \Xi_c/\Xi_0$ and
$|Z_0(1+it)|$ is bounded away from zero for $|t| \ge 2$
(Lemma~\ref{lem:Z0}), $\sup_t|Z_c(1+it)| = +\infty$.

\textbf{Part (f) (Approximate Euler product).}
This is Proposition~\ref{prop:ratio} below.
\end{proof}

\section{The Ratio $\Xi_c/\Xi_0$ and Dirichlet Series}\label{sec:ratio}

\begin{proposition}\label{prop:ratio}
For $\sigma > 1$, the ratio $\Xi_c/\Xi_0$ admits the
approximate Euler product representation
\begin{equation}\label{eq:euler-variant}
  \frac{\Xi_c}{\Xi_0}(s)
  = \kappa_0\cdot\prod_{k=1}^\infty
  \exp\!\bigl(-c\,e^{-ik\theta}\,p_k^{-s}\bigr)
  \cdot M(s),
\end{equation}
where $\kappa_0 = e^{cC_1}$ is an explicit nonzero constant
(with $C_1 = C_1(\theta)$ the non-oscillatory constant
from~\eqref{eq:real-part-K}), and $M(s)$ is bounded analytic
with $M(s)$ bounded for $\sigma > 1/2$, $|t|\ge 2$.
The Euler product $\prod\exp(-ce^{-ik\theta}p_k^{-s})
= \sum a_n\,n^{-s}$ has $|a_{p_k}| = c$,
$|a_{p_k^m}| = c^m/m!$, and $a_{mn}=a_ma_n$ for
$\gcd(m,n)=1$.
\end{proposition}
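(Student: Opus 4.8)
The plan is to exponentiate the Spectral Identity (Proposition~\ref{prop:spectral}) and read off the claimed factorization. Writing $w=s-\tfrac12$ as before, \eqref{eq:spectral-complex} gives, for $\sigma>1/2$ and $|t|\ge2$, that $\log(\Xi_c/\Xi_0)(s)=-c\,D(s)+R(s)$ with $R$ analytic there (both factors are analytic: $\Xi_c/\Xi_0$ is nonvanishing for $\sigma>1/2$, since all zeros lie on $\sigma=1/2$, and $D$ converges for $\sigma>0$) and $R(s)=O(1)$ uniformly. Exponentiating yields $(\Xi_c/\Xi_0)(s)=e^{R(s)}\exp(-c\,D(s))$. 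For $\sigma>1$ the series $D(s)=\sum_k e^{-ik\theta}p_k^{-s}$ converges absolutely, with $\sum_k|c\,e^{-ik\theta}p_k^{-s}|=c\,P(\sigma)<\infty$, so I can rewrite $\exp(-c\,D(s))=\prod_k\exp(-c\,e^{-ik\theta}p_k^{-s})$ as an absolutely (and locally uniformly) convergent product of nonvanishing analytic factors, which is the stated Euler product. It then remains to split $e^{R(s)}$ into a constant $\kappa_0$ times a normalized bounded factor $M(s)$, and to unpack the Dirichlet-series structure of the product.

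For the constant, I would recall from Step~3 of the proof of Proposition~\ref{prop:spectral} that $R$ is assembled from the non-oscillatory Fourier contributions (which, with amplitudes $-1/(\pi p_k^{1/2})$, sum by Abel summation to $c\,C_1$ with $C_1=C_1(\theta)$), together with the bounded lower-limit correction, the Euler--Maclaurin error, and the Borel--Carath\'eodory contribution of Step~4 --- all $O(1)$. Set $\kappa_0=e^{cC_1}\neq0$ and $M(s)=e^{R(s)-cC_1}$; then $M$ is analytic for $\sigma>1/2$ and $M(s)=O(1)$ uniformly for $\sigma>1/2$, $|t|\ge2$, immediately from $R(s)=O(1)$. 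For the arithmetic of the product, I would expand each local factor,
\[
  \exp\!\bigl(-c\,e^{-ik\theta}p_k^{-s}\bigr)=\sum_{m\ge0}\frac{(-c)^m e^{-imk\theta}}{m!}\,p_k^{-ms},
\]
so that the coefficient of $(p_k^m)^{-s}$ is $a_{p_k^m}=(-c)^m e^{-imk\theta}/m!$, whence $|a_{p_k^m}|=c^m/m!$ and in particular $|a_{p_k}|=c$; multiplying these absolutely convergent series over all~$k$ (legitimate for $\sigma>1$, where the double series is dominated termwise by $\prod_k\exp(c\,p_k^{-\sigma})=\exp(cP(\sigma))<\infty$) produces a single Dirichlet series $\sum_n a_n n^{-s}$ with $a_n=\prod_{p^{e}\|n}a_{p^{e}}$, which is exactly the multiplicativity $a_{mn}=a_ma_n$ for $\gcd(m,n)=1$.

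The step I expect to be the real obstacle is the assertion $M(s)\to1$ as $c\to0$, which is not literally contained in Proposition~\ref{prop:spectral} (where $c$ is fixed and the implicit constants may depend on it). To recover it I would re-run the proof of the Spectral Identity tracking the $c$-dependence: the perturbations $\delta_n=-c\,E(\gamma_n^{(0)})/N_{\mathrm{main}}'(\gamma_n^{(0)})$ are linear in~$c$, the linearized log-ratio of Step~1 is precisely $-c$ times a $c$-free quantity, and the quadratic Taylor remainder is $O(c^2)$ --- with Lemma~\ref{lem:lin} supplying the convergence of $\sum E(\gamma_n^{(0)})^2/n^2$ that carries the $c^2$ coefficient --- so in fact $R(s)=c\,C_1+O(c^2)$ uniformly in the region, giving $M(s)=\exp(O(c^2))\to1$ and likewise $\kappa_0=e^{cC_1}\to1$ as $c\to0$. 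A secondary fiddly point is making $C_1$ a genuine ($\sigma$-independent) constant: the non-oscillatory sum $-\sum_k p_k^{-1/2}\cos(k\theta)\,(1-p_k^{-(\sigma-1/2)})$ carries a mild $\sigma$-dependence, so I would either define $C_1=-\sum_k p_k^{-1/2}\cos(k\theta)$ (its $\sigma\to+\infty$ limit, convergent by Abel summation) and absorb the remaining $o_{\sigma\to\infty}(1)$ discrepancy into $M$, or simply define $\kappa_0:=\lim_{\sigma\to+\infty}(\Xi_c/\Xi_0)(\sigma)$ along the real axis and verify that this limit exists and is nonzero from the estimates already established (the logarithm of each Hadamard factor tending to $\log((\gamma_n^{(0)})^2/\gamma_n^2)$, with the resulting series controlled as in Lemma~\ref{lem:lin}). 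Either way, no analytic input beyond Proposition~\ref{prop:spectral} and Lemma~\ref{lem:lin} should be required.
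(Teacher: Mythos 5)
Your proposal follows essentially the same route as the paper's proof: exponentiate the spectral identity \eqref{eq:spectral-complex}, split $e^{R(s)}$ into a nonzero constant times a bounded analytic factor, and expand the absolutely convergent prime-supported exponential (for $\sigma>1$) into the Euler product with the stated multiplicative coefficients. The paper is in fact less careful than you are—it simply sets $\kappa_0=e^{R(2)}$ and $M(s)=e^{R(s)-R(2)}$ without reconciling this with the advertised form $\kappa_0=e^{cC_1}$—and the $M\to1$ as $c\to0$ issue you flag as the main obstacle is not part of this proposition (it appears only in Theorem~\ref{thm:main}(f)), so your additional tracking of the $c$-dependence is harmless but not required.
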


\begin{proof}
The spectral identity~\eqref{eq:spectral-complex} gives
$\log(\Xi_c/\Xi_0)(s) = -cD(s) + R(s)$ with $R$ bounded.
Write $R(s) = R(2) + (R(s)-R(2))$; the constant
$\kappa_0 = e^{R(2)}$ is a nonzero number and
$M(s) = e^{R(s)-R(2)}$ is bounded analytic.  Then
$\Xi_c/\Xi_0 = \kappa_0\cdot\exp(-cD(s))\cdot M(s)$.
Expanding the exponential of the prime-supported Dirichlet
series gives the Euler product and multiplicative coefficients.
\end{proof}

Since both $\Xi_c$ and~$\Xi_0$ satisfy
$\Xi(s)=\Xi(1\!-\!s)$, the ratio inherits the functional equation
$\Xi_c(s)/\Xi_0(s) = \Xi_c(1-s)/\Xi_0(1-s)$.
The ratio is \emph{meromorphic}: it has
\emph{zeros} at $\{1/2+i\gamma_n\}$ (from~$\Xi_c$) and
\emph{poles} at $\{1/2+i\gamma_n^{(0)}\}$ (from~$\Xi_0$), all on
$\sigma=1/2$.

\section{Almost-Periodicity}\label{sec:AP}

The Dirichlet series $D(s)$ itself has a clean
almost-periodicity hierarchy.

\begin{proposition}\label{prop:AP}
\begin{enumerate}
\item[\emph{(i)}] For $\sigma > 1$: $t\mapsto D(\sigma+it)$
  is Bohr almost-periodic (uniformly AP), since $D$ is an
  absolutely convergent Dirichlet series.
\item[\emph{(ii)}] For $\sigma > \tfrac{1}{2}$:
  $t\mapsto D(\sigma+it)$ is Besicovitch ($B^2$)
  almost-periodic with
  $\|D(\sigma+i\,\cdot\,)\|_{B^2}^2
  = \sum_{k=1}^\infty p_k^{-2\sigma} < \infty$.
\item[\emph{(iii)}] The restriction $E(T) =
  \pi^{-1}\im\,D(\tfrac{1}{2}+iT)$ is \emph{not}
  $B^2$-AP: $\sum p_k^{-1} = \infty$
  (Mertens).
\end{enumerate}
\end{proposition}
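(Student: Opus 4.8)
The plan is to take the three parts in increasing order of difficulty, using the standard theory of Besicovitch almost-periodic functions: the $B^2$-AP functions form a Hilbert space under the seminorm $\|f\|_{B^2}^2=\limsup_{T\to\infty}\tfrac1{2T}\int_{-T}^T|f(u)|^2\,du$, the exponentials $\{e^{i\lambda t}\}$ are an orthonormal system, and each $f\in B^2$-AP has Fourier--Bohr coefficients $\widehat f(\lambda)=\lim_{T\to\infty}\tfrac1{2T}\int_{-T}^T f(u)e^{-i\lambda u}\,du$ satisfying Parseval $\sum_\lambda|\widehat f(\lambda)|^2=\|f\|_{B^2}^2<\infty$ (hence Bessel). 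Part~(i) is then routine: the truncations $D_N(\sigma+it)=\sum_{k\le N}e^{-ik\theta}p_k^{-\sigma}e^{-it\log p_k}$ are trigonometric polynomials, and by absolute convergence $\sup_t|D(\sigma+it)-D_N(\sigma+it)|\le\sum_{k>N}p_k^{-\sigma}\to 0$, so $D(\sigma+i\cdot)$ is a uniform limit of Bohr-AP functions and hence Bohr-AP. For~(ii): the formal Fourier--Bohr series of $D(\sigma+i\cdot)$ is $\sum_k e^{-ik\theta}p_k^{-\sigma}e^{-it\log p_k}$, whose coefficients are square-summable for $\sigma>\tfrac12$ since $\sum_k p_k^{-2\sigma}<\infty$; hence the truncations are Cauchy in $B^2$ (with $\|D_N-D_M\|_{B^2}^2=\sum_{M<k\le N}p_k^{-2\sigma}$) and converge to a $B^2$-AP function of $B^2$-norm squared $\sum_k p_k^{-2\sigma}$. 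The one point to verify is that this $B^2$-limit is represented by the pointwise (conditionally convergent) $D(\sigma+it)$ of~\eqref{eq:D}; since the abscissa of convergence of $D$ is $0$ (\S\ref{sec:construction}), $D_N(\sigma+iu)\to D(\sigma+iu)$ for every $u$, and a $B^2$-convergent sequence with an everywhere-convergent pointwise limit must $B^2$-converge to that limit (extract an almost-everywhere convergent subsequence), so the two limits coincide.

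For~(iii) I would argue by contradiction. Suppose $E(T)=\pi^{-1}\im D(\tfrac12+iT)$ is $B^2$-AP. The frequencies $\omega_k=\log p_k$ are pairwise distinct, so Bessel's inequality gives $\sum_k|\widehat E(\omega_k)|^2\le\|E\|_{B^2}^2<\infty$. I would compute $\widehat E(\omega_k)$ by comparison with the truncation $E_N$ ($N\ge k$) of~\eqref{eq:E}, which is a trigonometric polynomial with $\widehat{E_N}(\omega_k)=-\tfrac1{2\pi i}e^{ik\theta}p_k^{-1/2}$: showing that the tail $E-E_N=-\tfrac1\pi\sum_{l>N}p_l^{-1/2}\sin(u\omega_l+\phi_l)$---which involves only the frequencies $\pm\omega_l$ with $l>N$, none equal to $\omega_k$---contributes nothing to the mean value at frequency $\omega_k$, one obtains $\widehat E(\omega_k)=-\tfrac1{2\pi i}e^{ik\theta}p_k^{-1/2}$, hence $|\widehat E(\omega_k)|^2=\tfrac1{4\pi^2 p_k}$. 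Bessel's inequality then forces $\tfrac1{4\pi^2}\sum_k p_k^{-1}<\infty$, contradicting the Mertens divergence $\sum_k p_k^{-1}=\infty$ recorded in the statement; so $E$ is not $B^2$-AP.

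The hard part is exactly the claim that the tail $E-E_N$ contributes nothing to the mean at frequency $\omega_k$: the tail is only conditionally convergent and is unbounded on the line (with $|(E-E_N)(u)|=O_N(|u|^{1/2+\varepsilon})$ by~\eqref{eq:D-bound}), so one cannot naively interchange the mean value with the sum over $l$. I would handle it by writing $E-E_N=\tfrac1\pi\im D^{(N)}(\tfrac12+i\cdot)$ with $D^{(N)}(s)=\sum_{l>N}e^{-il\theta}p_l^{-s}$ (still of abscissa of convergence $0$), using uniform convergence of $D^{(N)}(\tfrac12+iu)$ on each bounded $u$-interval to interchange $\int_{-T}^T$ with the sum at fixed $T$, and then Abel summation against the bounded character sums $G_K=\sum_{j\le K}e^{-ij\theta}$, $|G_K|\le C_\theta$ (as in \S\ref{sec:construction} and in the tail estimate of Lemma~\ref{lem:lin}), to show $\tfrac1{2T}\int_{-T}^T(E-E_N)(u)e^{-i\omega_k u}\,du\to 0$ as $T\to\infty$. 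An equivalent route, repackaging the same estimate, is to prove $\limsup_{T\to\infty}\tfrac1{2T}\int_{-T}^T E(u)^2\,du=+\infty$ directly: from $E=E_N+(E-E_N)$ and discarding the nonnegative square of the tail, $\tfrac1{2T}\int_{-T}^T E^2\ge\tfrac2{2T}\int_{-T}^T E_N E-\tfrac1{2T}\int_{-T}^T E_N^2$, and the same Fourier-coefficient input gives $\lim_{T\to\infty}\tfrac1{2T}\int_{-T}^T E_N E=\lim_{T\to\infty}\tfrac1{2T}\int_{-T}^T E_N^2=\tfrac1{2\pi^2}\sum_{k\le N}p_k^{-1}$, so $\limsup_{T\to\infty}\tfrac1{2T}\int_{-T}^T E^2\ge\tfrac1{2\pi^2}\sum_{k\le N}p_k^{-1}$, which tends to $+\infty$ as $N\to\infty$.
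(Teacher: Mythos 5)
Your overall architecture is the same as the paper's (whose own proof is a three-line sketch: uniform convergence for (i), Riesz--Fischer for (ii), divergent Fourier energy for (iii)), and parts (i) and (iii) are essentially correct elaborations of it. But one step in your (ii) fails as stated: the claim that a $B^2$-convergent sequence with an everywhere-convergent pointwise limit must $B^2$-converge to that pointwise limit, justified by ``extract an almost-everywhere convergent subsequence.'' The Besicovitch seminorm is blind to every fixed bounded set and does not come from a measure, so $B^2$-convergence gives no subsequential a.e.\ control: for example $f_N(u)=\mathbf{1}_{\{|u|>N\}}$ satisfies $\|f_N-1\|_{B^2}=0$ for every $N$ while $f_N\to0$ pointwise everywhere, and $\|1-0\|_{B^2}=1$. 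Hence pointwise convergence of $D_M(\sigma+iu)$ to $D(\sigma+iu)$ does not identify the $B^2$-limit of the truncations with $D(\sigma+i\cdot)$. What you actually need is $\limsup_{T\to\infty}\frac{1}{2T}\int_{-T}^{T}|D-D_M|^2(\sigma+iu)\,du\to0$ as $M\to\infty$; this is the classical (Carlson-type) mean-value theorem for ordinary Dirichlet series in their half-plane of convergence, and it can also be proved directly by the very cutoff-plus-Abel-summation estimate you use in (iii). Either cite that theorem or run that estimate; with it, (ii) is complete and $\|D(\sigma+i\cdot)\|_{B^2}^2=\sum_k p_k^{-2\sigma}$ follows.

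For (iii) you correctly isolate the point the paper passes over in silence: that the Fourier--Bohr coefficient of the conditionally convergent, unbounded $E$ at $\omega_k$ really is $-\tfrac{1}{2\pi i}e^{ik\theta}p_k^{-1/2}$, i.e.\ that the tail $E-E_N$ contributes nothing to the mean. Your plan works, with one refinement: summation by parts over the whole range $l>N$, using $|\sin(T(\omega_l\pm\omega_k))/(T(\omega_l\pm\omega_k))|$ and its variation in $\omega_l$, only yields a bound uniform in $T$ (of size $O_N(1)$), not one decaying as $T\to\infty$, because the variation of the kernel per step is of order $1/(\omega_l-\omega_k)$ independently of $T$. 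You must first split at a $T$-dependent index (say $p_l\le T$), use the $1/T$ decay of the averaged exponentials on the initial range, and apply Abel summation against the bounded sums $G_K$ only on the far range, where the total variation of the multiplier is $O\bigl(p_L^{-1/2}/\log(p_L/p_k)\bigr)\to0$. With that split, both of your routes go through --- the Bessel-inequality contradiction, and the direct proof that $\limsup_{T}\frac{1}{2T}\int_{-T}^{T}E^2\ge\frac{1}{2\pi^2}\sum_{k\le N}p_k^{-1}$ for every $N$; the latter is the cleaner option and is closest in spirit to the paper's one-line ``the Fourier energy diverges.''
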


\begin{proof}
(i) Absolute convergence of $\sum p_k^{-\sigma}$ for $\sigma > 1$
gives uniform convergence of the trigonometric
series, hence Bohr-AP.
(ii) The Fourier--Bohr coefficients of $D(\sigma+it)$
at exponents $\omega_k = \log p_k$ have squared moduli
$p_k^{-2\sigma}$.  Since $\sum p_k^{-2\sigma}$ converges
for $\sigma > 1/2$, the Riesz--Fischer theorem gives the result.
(iii) Since $\sum p_k^{-1} = \infty$ (Mertens), the Fourier
energy diverges at $\sigma = 1/2$.
\end{proof}

The spectral identity transfers these properties to
$\log(\Xi_c/\Xi_0)$ up to the bounded error~$R(s)$.
The Fourier--Bohr coefficients of $\log(\Xi_c/\Xi_0)$ at
frequencies~$\omega_k$ are $-c\,e^{-ik\theta}\,p_k^{-\sigma}$
to leading order; since $R$ is bounded but need not itself be
almost-periodic, the AP properties of $\log(\Xi_c/\Xi_0)$
hold to leading order.

The AP hierarchy mirrors that of~$\zeta$: Bohr-AP for
$\sigma > 1$ (Bohr~\cite{Bohr1913}), $B^2$-AP for
$\sigma > 1/2$
(Bohr--Jessen~\cite{BohrJessen1932}), with the
convergent-to-divergent Fourier norm transition at
$\sigma = 1$.

\section{Observations}\label{sec:observations}

We illustrate the construction numerically with $c = 0.05$ and a
finite truncation $K = 80$.  The computations use the shifted
variant $\alpha_k = p_k + \theta$ in place of~$p_k$, which ensures
global monotonicity of $F_K(T) = N_{\mathrm{main}}(T) + cE_K(T)$
(see Remark~\ref{rem:shift}).  Since $\alpha_k \sim p_k$ as
$k\to\infty$, the convergence/divergence properties at
$\sigma = 1$ are unchanged, and the spectral identity has the same
structure with $\alpha_k$ replacing~$p_k$.

\begin{remark}\label{rem:shift}
For the finite truncation $E_K$, global monotonicity of~$F_K$
holds when $c < N_{\mathrm{main}}'(T_0)/(\sum_{k=1}^K
\omega_k\alpha_k^{-1/2})$, which is satisfied for the parameters
used.  For the full infinite series, occasional crossings
($\gamma_{n+1} < \gamma_n$) may occur but do not affect the
Hadamard product or the spectral identity (which depend on the
linearization at $\gamma_n^{(0)}$, not on the ordering of
$\gamma_n$).
\end{remark}

\subsection{First zeros of $Z_c$}

The perturbed zeros $\gamma_n$ (defined by $F_K(\gamma_n)=n$ for the
truncation) and their shifts $\delta_n = \gamma_n - \gamma_n^{(0)}$:
\begin{center}\small
\begin{tabular}{rlrrr}
\hline
$n$ & $\gamma_n$ & $\delta_n$ & gap & $E(\gamma_n)$ \\
\hline
1 & 17.9157 & $+0.068$ & --- & $-0.011$ \\
2 & 23.2341 & $+0.062$ & 5.318 & $-0.013$ \\
3 & 27.7063 & $+0.035$ & 4.472 & $-0.008$ \\
4 & 31.6613 & $-0.057$ & 3.955 & $+0.015$ \\
5 & 35.4981 & $+0.030$ & 3.837 & $-0.008$ \\
10 & 51.7580 & $+0.024$ & 3.061 & $-0.008$ \\
20 & 78.6866 & $+0.014$ & 2.522 & $-0.006$ \\
50 & 144.5359 & $+0.005$ & 2.038 & $-0.003$ \\
100 & 236.8981 & $+0.045$ & 1.808 & $-0.026$ \\
\hline
\end{tabular}
\end{center}
Max shift: $|\delta_n| \le 0.077$.  Min gap: $1.67 > 0$.
Gaps decrease as $\sim 2\pi/\log(\gamma_n/(2\pi))$, matching
the Weyl law.

\subsection{Zero statistics: theory and numerics}\label{sec:stats}

The microscopic statistics of $\Xi_c$'s zeros can be analyzed
both theoretically and numerically.  The key quantity is the ratio
of perturbation to spacing:
\begin{equation}\label{eq:pert-ratio}
  \frac{|\delta_n|}{\text{spacing}}
  \approx c\,|E(\gamma_n^{(0)})|.
\end{equation}

\textbf{Mean square of $E$ at the zeros.}
By Weyl equidistribution of the phases
$\{\omega_k\gamma_n^{(0)}+\phi_k\}$ modulo~$2\pi$
(for each finite truncation~$K$, then taking $K\to\infty$):
\begin{equation}\label{eq:E-meansq}
  \frac{1}{N}\sum_{n=1}^N E(\gamma_n^{(0)})^2
  \;\longrightarrow\;
  \frac{1}{2\pi^2}\sum_{k=1}^\infty p_k^{-1}
  \;=\; \frac{\log\log N + M}{2\pi^2} + o(1)
\end{equation}
(where $M$ is the Meissel--Mertens constant), so the RMS of
$E(\gamma_n^{(0)})$ grows like $\sqrt{\log\log N/(2\pi^2)}$.
Consequently:

\begin{enumerate}
\item \textbf{RMS of the counting error:}
  $\bigl(\frac{1}{N}\sum(N_c(\gamma_n)-n)^2\bigr)^{1/2}
  \sim c\sqrt{\log\log N/(2\pi^2)}$,
  which matches the growth rate of $S(T)$ for~$\zeta$
  (cf.~\cite[Thm.~14.24]{Titchmarsh1986}) but scaled by~$c$.

\item \textbf{Perturbation-to-spacing ratio:}
  By~\eqref{eq:pert-ratio}, the RMS ratio is
  $c\sqrt{\log\log N/(2\pi^2)}$, which exceeds~$1$
  only for $N > \exp\exp(2\pi^2/c^2)$.  For $c = 0.05$
  this threshold is $N > 10^{10^{347}}$---far beyond any
  computation.  Thus the zeros appear nearly crystalline
  (``super-rigid'') over any accessible range, but
  \emph{asymptotically} the perturbations grow to many
  spacings, and the clock-like character is lost.

\item \textbf{Normalized spacing variance:}
  The deviation of the $n$-th normalized spacing from~$1$
  is $(\delta_{n+1}-\delta_n)\cdot N_{\mathrm{main}}'
  = c(E(\gamma_{n+1}^{(0)})-E(\gamma_n^{(0)}))$,
  which has variance $O(c^2)$ for each~$n$ (from
  the smoothness of~$E$ on the scale of zero spacing).
  The normalized spacing std is therefore $O(c)$, much
  smaller than the GUE value~$\approx 0.42$.
\end{enumerate}

\textbf{Numerical observations} ($c = 0.05$, $K = 80$,
1000~zeros with $t\in[17.9,1420.1]$):

\begin{center}\small
\renewcommand{\arraystretch}{1.05}
\begin{tabular}{lcc}
\hline
Statistic & $\zeta$ (conditional on RH) & $Z_c$ (observed) \\
\hline
RMS of $N - N_{\mathrm{main}}$
  & $\sqrt{\frac{\log\log T}{2\pi^2}}\to\infty$
  & $0.013$ \\[2pt]
Spacing std (normalized) & $\approx 0.42$
  (GUE~\cite{Montgomery1973}) & $0.020$ \\
Number variance $\Sigma^2(L)$ & $\sim\tfrac{2}{\pi^2}\log L$
  & $\lesssim 0.25$ \\
\hline
\end{tabular}
\end{center}
The observed values reflect the regime
$c|E| \ll 1$ and are consistent with the theoretical
$O(c)$ spacing deviation.  The
asymptotic growth $c\sqrt{\log\log N}$ is invisible at
these heights.  The construction captures the
\emph{macroscopic} transition (bounded/unbounded, AP hierarchy)
but not the microscopic GUE statistics of~$\zeta$, which arise
from the Euler product and prime correlations that $Z_c$
deliberately lacks.

\subsection{Value distribution}\label{sec:value-dist}

Figure~\ref{fig:value-dist} shows the complex-plane images of
$\log(\Xi_c/\Xi_0)(\sigma+it)$, $\log\zeta(\sigma+it)$, and
$\log L(\sigma+it,\chi_4)$ as $t$ ranges over $[20,300]$, for
$\sigma = 1.25, 1.0, 0.75$.

By the spectral identity and the $B^2$-AP structure of~$D$,
the mean square of $\log(\Xi_c/\Xi_0)$ satisfies (to leading
order in~$c$):
$\langle(\re\log\frac{\Xi_c}{\Xi_0})^2\rangle
 \approx \langle(\im\log\frac{\Xi_c}{\Xi_0})^2\rangle
 \approx \tfrac{c^2}{2}\sum p_k^{-2\sigma}$
(to leading order, from the $B^2$-norm of~$D$).
The value distribution is \emph{isotropic} in the complex plane,
as for $\log\zeta$ via Bohr--Jessen~\cite{BohrJessen1932}.

The quantitative difference is dramatic: at $\sigma = 0.75$, the
RMS of $\log(\Xi_c/\Xi_0)$ is approximately $0.046$, while
$\log\zeta$ has RMS $\approx 0.88$.
The construction's value distribution is confined to a bounded disk,
whereas $|\log\zeta(\sigma+it)|$ is unbounded.
This means $\Xi_c/\Xi_0$ \emph{cannot} satisfy Voronin
universality~\cite{Voronin1975}, and its moments do not exhibit the
$(\log T)^{k^2}$ growth of~$\zeta$
(cf.~\cite{KeatingSna2000}).  The value distribution is
isotropic but \emph{concentrated}---another manifestation of the
missing arithmetic.

\begin{figure}[ht]
\centering
\includegraphics[width=\textwidth]{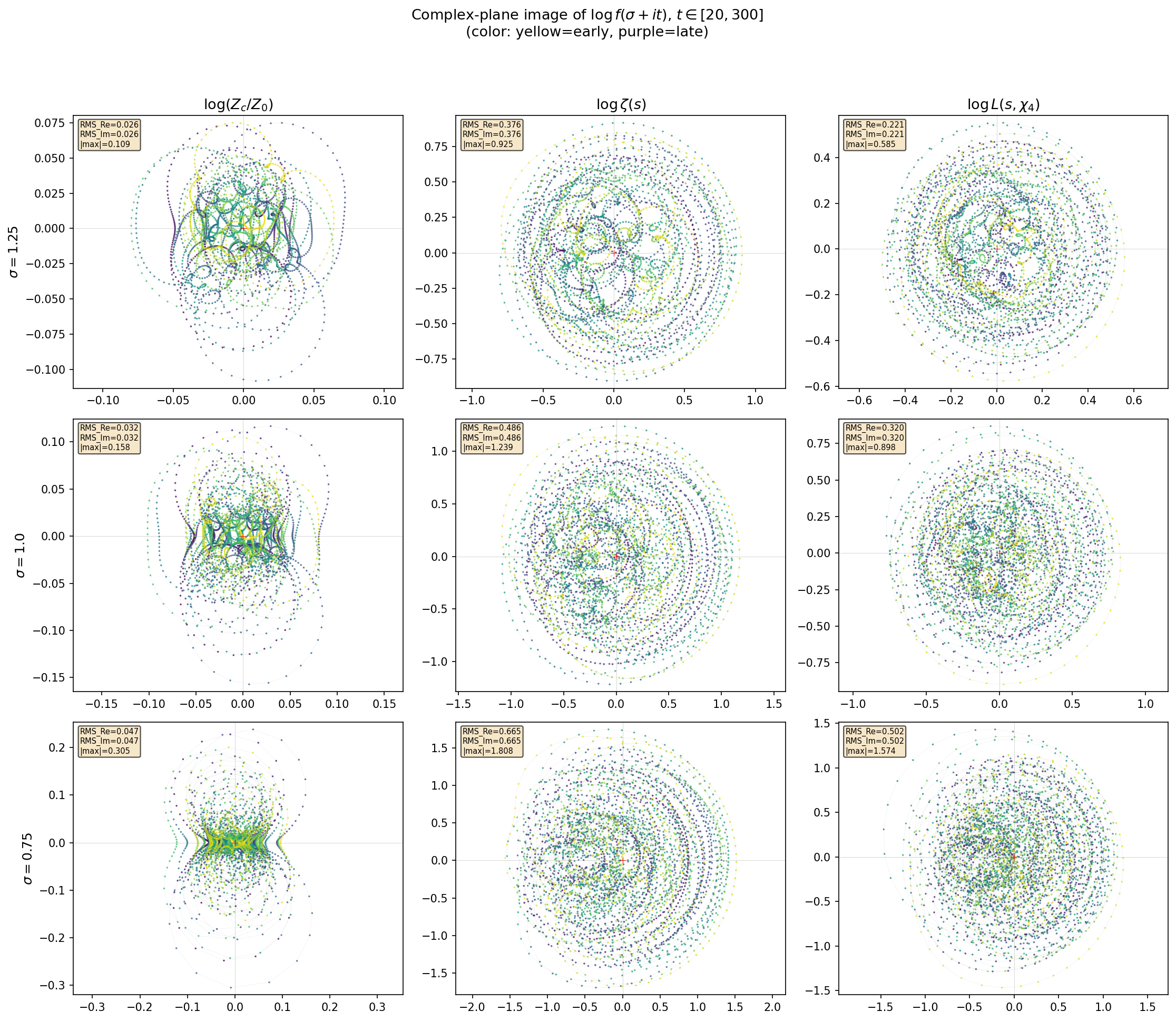}
\caption{Complex-plane image of $\log f(\sigma+it)$ for
  $t\in[20,300]$.
  Left: $\log(\Xi_c/\Xi_0)$; center: $\log\zeta$; right:
  $\log L(s,\chi_4)$.
  Rows: $\sigma=1.25, 1.0, 0.75$.  Color encodes $t$
  (yellow$=$early, purple$=$late).  Note the ${\sim}20{\times}$
  difference in scale between the left column and the
  others.}\label{fig:value-dist}
\end{figure}

\begin{figure}[ht]
\centering
\includegraphics[width=0.85\textwidth]{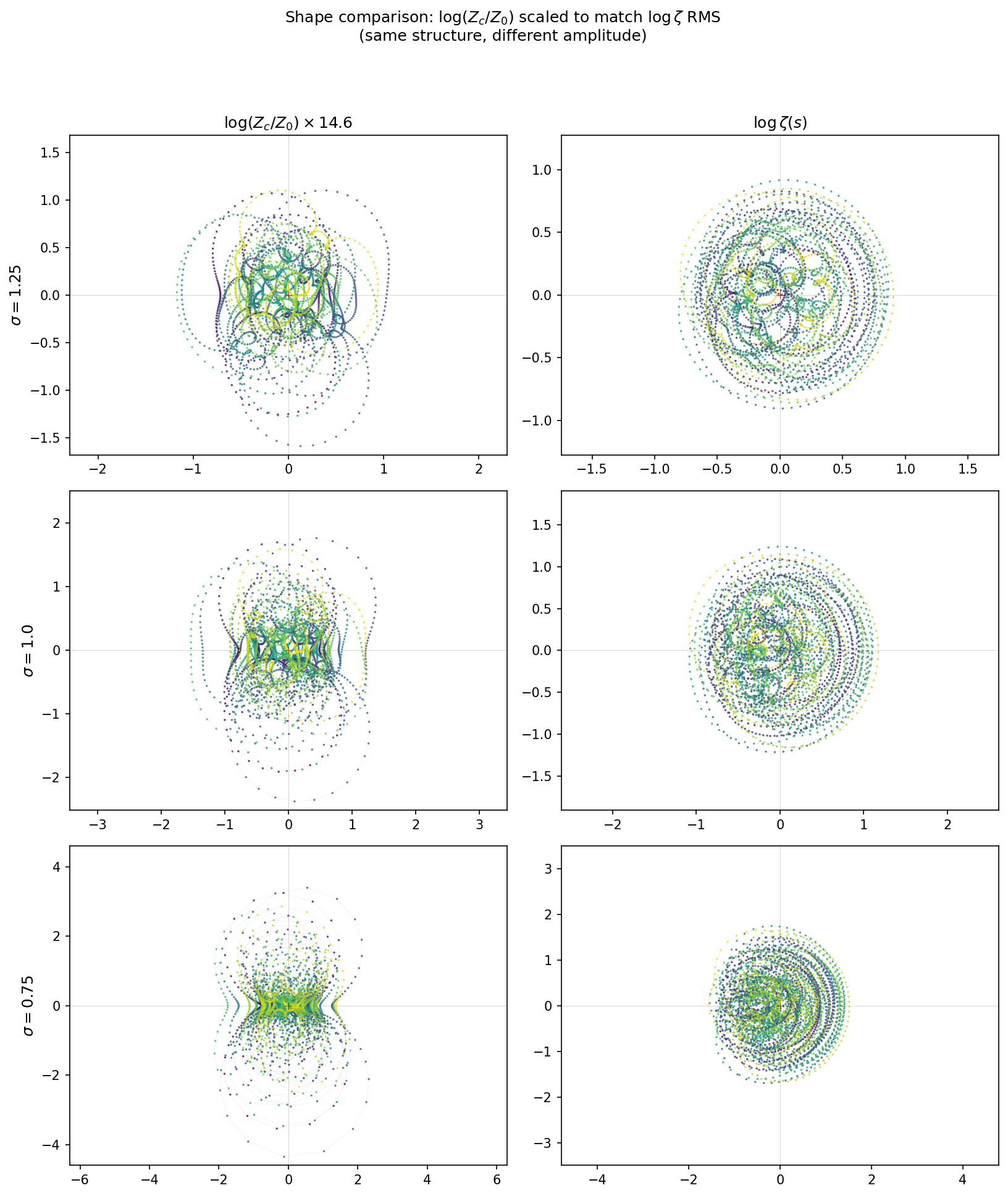}
\caption{Shape comparison: $\log(\Xi_c/\Xi_0)$ rescaled to match
  the RMS of $\log\zeta$ (left) vs.\ $\log\zeta$ (right).  The
  rescaled curve fills a \emph{banded} region (reflecting the
  coherent trigonometric structure) rather than the isotropic spirals
  of $\log\zeta$ (reflecting multiplicative independence of
  primes).}\label{fig:value-normalized}
\end{figure}

\FloatBarrier

\subsection{Numerical evidence for the linearization condition}
\label{sec:lin-evidence}

Lemma~\ref{lem:lin} establishes that
$\sum_{n=1}^\infty E(\gamma_n^{(0)})^2/n^2 < \infty$.  We verify
key aspects numerically with $K=80$ terms and $N = 5000$ zero
positions.

\textbf{Mean square and Weyl equidistribution.}
The empirical mean square agrees with the Weyl prediction:
\begin{center}\small
\begin{tabular}{rrrr}
\hline
$N$ & $\frac{1}{N}\sum_{n=1}^N E(\gamma_n^{(0)})^2$ &
  $\frac{1}{2\pi^2}\sum_{k=1}^K p_k^{-1}$ & ratio \\
\hline
100 & 0.0688 & 0.0654 & 1.053 \\
500 & 0.0668 & 0.0654 & 1.022 \\
1000 & 0.0649 & 0.0654 & 0.994 \\
5000 & 0.0648 & 0.0654 & 0.991 \\
\hline
\end{tabular}
\end{center}
The ratio stabilizes near~$1$, consistent with joint
equidistribution of the $K$ phases
$\{\omega_k\gamma_n^{(0)}+\phi_k\}$ modulo~$2\pi$.

\textbf{Spectral identity remainder.}
The remainder $R(t)$ in the spectral identity
$\re\log(\Xi_c/\Xi_0)(\sigma\!+\!it)
= -c\sum p_k^{-\sigma}\cos(\omega_k t+\phi_k) + R(t)$ has:
\begin{center}\small
\begin{tabular}{rrrr}
\hline
$\sigma$ & $\operatorname{mean}(R)$ & $\operatorname{std}(R)$ &
  $\max|R|$ \\
\hline
1.50 & $-0.015$ & $0.021$ & $0.081$ \\
1.10 & $-0.016$ & $0.030$ & $0.122$ \\
1.01 & $-0.016$ & $0.034$ & $0.136$ \\
1.00 & $-0.016$ & $0.034$ & $0.138$ \\
\hline
\end{tabular}
\end{center}
The remainder is $O(c)$ as claimed, and the linearization
contribution ($\sim 10^{-5}$) is negligible compared to the
constant and lower-limit terms.

\section{Open Problems and Concluding Remarks}\label{sec:conclusion}

The function~$\Xi_c$ provides an explicit entire function of
order~$1$ that provably satisfies an analogue of RH (all zeros on
$\sigma = 1/2$) together with the sharp bounded/unbounded transition
at $\sigma = 1$ characteristic of~$\zeta$.  The ratio $\Xi_c/\Xi_0$
is, to leading order, a Dirichlet series with generalized Euler
product (Proposition~\ref{prop:ratio}), providing a concrete object
whose zeros are provably on the critical line and whose analytic
properties can be studied unconditionally.

The construction demonstrates that the $\sigma = 1$ transition is
driven entirely by the marginal convergence/divergence of the
Dirichlet series $D(s) = \sum e^{-ik\theta}p_k^{-s}$ at
$\sigma = 1$, in direct analogy with
$\log\zeta(s) = P(s) + O(1)$~\cite[\S3.2]{Titchmarsh1986}.  In the
context of the Selberg class, this shows that an RH-like zero
distribution is compatible with the $\sigma = 1$ transition---the two
properties reinforce rather than obstruct each other.

Several directions for further investigation suggest themselves.

\textbf{GUE statistics.}
The zeros of~$Z_c$ are super-rigid (nearly crystalline spacing,
\S\ref{sec:stats}), far from the GUE statistics of~$\zeta$.
What additional structure on the perturbation would force GUE pair
correlation onto the zeros of~$\Xi_c$?

\textbf{Pointwise bounds on $E$.}
The mean square~\eqref{eq:E-meansq} shows that
$|E(\gamma_n^{(0)})| \sim \sqrt{\log\log n}$ in RMS, but
the worst-case bound~\eqref{eq:D-bound} gives only
$O(\gamma_n^{1/2+\varepsilon})$.  Numerically,
$|E(\gamma_n^{(0)})| \le 1.05$ for all computed~$n$
(\S\ref{sec:lin-evidence}).  Proving that
$|E(\gamma_n^{(0)})| = O(\sqrt{\log\log n})$ pointwise
(matching the RMS rate) would sharpen the linearization
estimate and clarify when the perturbations first exceed the
zero spacing.

\textbf{Growth rate on $\sigma = 1$.}
The Phragm\'en--Lindel\"of argument gives
$\sup_t|Z_c(1+it)| = +\infty$ but not the rate.
For~$\zeta$, $|\zeta(1+it)| = \Omega(\log\log t)$
\cite[Thm.~8.5]{Titchmarsh1986}.
Does $|Z_c(1+it)|$ achieve the same rate?

\textbf{An actual Dirichlet series.}
The ratio $\Xi_c/\Xi_0$ is \emph{approximately} a Dirichlet
series with Euler product (Proposition~\ref{prop:ratio}), but
the bounded correction factor~$M(s)$ prevents it from being
one exactly.  The base function~$\Xi_0$ is not a Dirichlet
series either (its Fourier exponents $\log\gamma_n^{(0)}$ are
not of the form $\log n$).  Can one unconditionally construct a
Dirichlet series $\sum a_n\,n^{-s}$ (convergent for
$\sigma > 1/2$) with all zeros provably on $\sigma = 1/2$ and
the $\sigma = 1$ transition?  The Davenport--Heilbronn
example~\cite{DavenportHeilbronn1936} shows that a functional
equation alone does not suffice; some form of multiplicative
structure appears necessary.

\section*{Acknowledgments}

This paper was developed through a collaboration between the
author and Claude (Anthropic).  The author posed the original
question (whether one can construct an explicit entire function
with the Riemann Hypothesis and the $\sigma = 1$ transition),
proposed the unit-disc analogy that motivated the Hadamard
product approach, identified that a family result is insufficient
(insisting on a single function), and formulated the
equidistribution criterion for unboundedness.  Claude proposed
the Hadamard product construction, developed the spectral
identity and its proof, found the dyadic large-sieve argument
that resolved the linearization convergence, suggested the
$\alpha_k = p_k$ switch that yields the Dirichlet series
structure, and wrote the paper.

\end{document}